\numberwithin{equation}{section}
\newcommand{\bfu}{\vc{u}}
\newcommand{\bfq}{\vc{q}}
\newcommand{\bfvarphi}{\boldsymbol{\varphi}}
\newcommand{\pas}{\prst\mbox{-a.s.}}
\newcommand{\D}{{\rm d}}
\newcommand{\intQ}[1]{\int_{Q} #1 \ \dx}
\newcommand{\Q}{\mathbb{T}^3}
\newcommand{\vr}{\varrho}
\newcommand{\vu}{\vc{u}}
\newcommand{\tmop}[1]{{\mathrm #1}}
\newcommand{\tmmathbf}[1]{{\bf #1}}
\newcommand{\vc}[1]{{\bf #1}}
\newcommand{\expe}[1]{ \mathbb{E} \left[ #1 \right] }
\newcommand{\Div}{\divergence}
\newcommand{\ep}{\varepsilon}
\newcommand{\R}{\mathbb R}
\newcommand{\N}{\mathbb N}
\newcommand{\E}{\mathbb E}
\newcommand{\dH}{\,\mathrm{d}\mathcal H^2}
\newcommand{\dd}{\mathrm{d}}
\newcommand{\dx}{\,\mathrm{d}x}
\newcommand{\dt}{\,\mathrm{d}t}
\newcommand{\dxt}{\,\mathrm{d}x\,\mathrm{d}t}
\newcommand{\dif}{\mathrm{d}}
\newcommand{\mf}{\mathfrak{F}}
\newcommand{\prst}{\mathbb{P}}
\newcommand{\mn}{\mathbb{N}}
\newcommand{\mt}{Q}
\DeclareMathOperator{\diver}{div}
\DeclareMathOperator{\divergence}{div}
\newcommand{\bFormula}[1]{
\begin{equation} \label{#1}}
\newcommand{\eF}{\end{equation}}
\newcommand{\Ov}[1]{\overline{#1}}
\newcommand{\DC}{C^\infty_c}
\newcommand{\aleq}{\stackrel{<}{\sim}}
\newcommand{\ageq}{\stackrel{>}{\sim}}
\newcommand{\vre}{\vr_\ep}
\newcommand{\vue}{\vu_\ep}
\newcommand{\tvr}{\tilde \vr}
\newcommand{\cal}{\mathcal}
\newcommand{\vt}{\vartheta}
\newcommand{\vte}{\vt_\ep}
\newcommand{\bfU}{\tilde \vu}
\newcommand{\bfS}{\mathbb S}
\newcommand{\Grad}{\nabla}
\newcommand{\intO}[1]{\int_{\mt} #1 \ \dx}
\newcommand{\intTor}[1]{\int_{Q} #1 \ \dx}
\newtheorem{Theorem}{Theorem}[section]
\newtheorem{Proposition}{Proposition}[section]
\newtheorem{Lemma}{Lemma}[section]
\newtheorem{corollary}{Corollary}[section]
\newtheorem{definition}{Definition}[section]
\newtheorem{Definition}{Definition}[section]
\begin{document}


\title{Stationary solutions in thermodynamics of stochastically forced fluids}

\author{Dominic Breit}
\address[D. Breit]{
Department of Mathematics, Heriot-Watt University, Riccarton Edinburgh EH14 4AS, UK}
\email{d.breit@hw.ac.uk}

\author{Eduard Feireisl}
\address[E. Feireisl]{Institute of Mathematics AS CR, \v{Z}itn\'a 25, 115 67 Praha 1, Czech Republic
\and Institute of Mathematics, TU Berlin, Strasse des 17.Juni, Berlin, Germany }
\email{feireisl@math.cas.cz}
\thanks{The research of E.F. leading to these results has received funding from
the Czech Sciences Foundation (GA\v CR), Grant Agreement
21--02411S. The Institute of Mathematics of the Academy of Sciences of
the Czech Republic is supported by RVO:67985840. The stay of E.F. at TU Berlin is supported by Einstein Foundation, Berlin.}

\author{Martina Hofmanov\'a}
\address[M. Hofmanov\'a]{Fakult\"at f\"ur Mathematik, Universit\"at Bielefeld, D-33501 Bielefeld, Germany}
\email{hofmanova@math.uni-bielefeld.de}
\thanks{M.H. gratefully acknowledges the financial support by the German Science Foundation DFG via the Collaborative Research Center SFB1283.}

\begin{abstract}
We study the full Navier--Stokes--Fourier system governing the motion of a general viscous, heat-conducting, and compressible fluid subject to
stochastic perturbation. The system is supplemented
with non-homogeneous Neumann boundary conditions
for the temperature and hence energetically open. We show that, in contrast with the energetically closed 
system, there exists a stationary solution.
Our approach is based on new global-in-time estimates which rely on the non-homogeneous boundary conditions combined with estimates for the pressure.

\end{abstract}

\subjclass[2010]{60H15, 35R60, 76N10,  35Q35}
\keywords{Compressible fluids, stochastic Navier--Stokes--Fourier system, stationary solution}

\date{\today}

\maketitle


\section{Introduction}
It is a common believe that the behaviour of turbulent fluid flows can be fully characterized
by a \emph{steady state} of the system (driven by a suitable stochastic forcing to substitute for possible perturbations due to changes in the boundary data), which is approached asymptotically for large times. 
Mathematically speaking this gives rise to an \emph{invariant measure} of the underlying system.
This is well-understood for the 2D incompressible stochastic Navier--Stokes equations, cf. \cite{francoa,FlMa,Ku,HM}, where uniqueness is well-known.

If uniqueness is not at hand, even the definition of an invariant measure becomes ambiguous, and one rather studies \emph{stationary solutions} of the dynamics: solutions with a probability law which does not change in time. This law serves as a substitute for an invariant measure.
The existence of stationary solutions to the 3D incompressible stochastic Navier--Stokes equations
is a nowadays classical result from \cite{franco}. More recently a counterpart for the
compressible stochastic Navier--Stokes equations has been established in \cite{BFHM}. It is interesting to note that in both cases stationarity provides a certain regularising effect on the solutions (see also \cite{FlaRom1} in connection with this).

One may think that adding further physical principles such as the possibility of \emph{heat transfer} completes the picture. The stochastic Navier--Stokes--Fourier equations haven been studied in \cite{BF} and the existence of weak martingale solutions has been shown. They describe the motion of a general viscous, heat-conducting, and compressible fluid subject to
stochastic perturbation based on the \emph{Second Law of Thermodynamics} via an entropy balance
as in \cite{F} (see also \cite{SmTr} for an alternative approach based on the internal energy balance due to \cite{fei3}).
Supplemented  with homogeneous Neumann boundary conditions for the temperature this is an \emph{energetically  closed} system.
The mechanical energy which is lost as dissipation is transfered into heat and, different to the incompressible or the isentropic Navier--Stokes equations, weak solutions are known to satisfy an energy equality.
The latter one shows that
the noise is constantly adding energy to the system such that it can never reach a steady state and, as shown in \cite[Section 7]{BF}, stationary solutions do not exist. Since this is physically not acceptable we are looking for a physical principle which can counteract the energy creation by the noise.\\

Different to \cite{BF} we consider in this paper an \emph{energetically open} version of the
stochastic Navier--Stokes--Fourier equations, where heat can drain through the boundary, see
\eqref{beq:1} below.
The time evolution of the fluid in the reference physical domain $Q\subset R^3$
is governed by the following set of equations:
\begin{subequations}\label{eq:1}
\begin{align}
\dd\varrho+\Div (\varrho\bfu)\dt&=0 ,\label{eq:12}\\
\dd(\varrho\bfu)+ \left[ \Div(\rho\bfu\otimes\bfu)  + \Grad p(\vr, \vt) \right] \dt &= \Div \bfS (\vt, \Grad \vu) \dt+\varrho{\mathbb{F}}(\varrho,\vt,\bfu)\,\dd W,\,\,\label{eq:11}\\
\dd(\varrho e(\vr, \vt))&+\big[\Div(\varrho e(\vr, \vt) \bfu)+\Div\bfq (\vt, \Grad \vt) \big]\dt\label{eq:13}\\&=\big[\bfS(\vt, \Grad \vu) :\nabla\bfu-p(\vr, \vt) \Div\bfu\big]\dt ,
 \nonumber
\end{align}
\end{subequations}
where $W$ is a cylindrical Wiener process and the diffusion coefficient $\mathbb{F}$ can be identified with a sequence $(\mathbf{F}_{k})_{k\geq 1}$ satisfying a suitable Hilbert-Schmidt assumption, see Section~\ref{sec:framework} for the precise definitions.
Here $\varrho$ denotes the density of the fluid, $\vt$ the absolute temperature and $\bfu$ the velocity field.
For the viscous stress tensor we suppose
Newton's rheological law
\begin{align}\label{eq:nr}
\bfS=\bfS(\vartheta,\nabla \bfu)=\mu(\vartheta)\Big(\nabla\bfu+\nabla\bfu^T-\frac{2}{3}\Div\bfu\,\mathbb I\Big)+\eta(\vartheta)\Div\bfu\,\mathbb I.
\end{align}

The internal energy (heat) flux is determined by Fourier's law
\begin{align}\label{eq:fl}
\bfq=\bfq(\vartheta,\nabla\vartheta)=-\kappa(\vartheta)\nabla\vartheta=-\nabla\mathcal K(\vartheta),\quad \mathcal K(\vartheta)=\int_0^\vartheta\kappa(z)\,\dd z.
\end{align}
The thermodynamic functions $p$ and $e$ are related to the (specific) entropy $s = s(\vr, \vt)$ through {Gibbs' equation}
\bFormula{m97} \vartheta D s (\varrho, \vartheta) = D e (\varrho,
\vartheta) + p (\varrho, \vartheta) D \Big( \frac{1}{\varrho} \Big)
\ \mbox{for all} \ \varrho, \vartheta > 0,
\eF
\\
where $D$ denotes the total derivative with respect to $(\vr,\vt)$.
We supplement \eqref{eq:1}--\eqref{m97} with the boundary conditions (see also \cite{FMNP})
\begin{equation} \label{beq:1}
\vu|_{\partial Q} = 0,\ \bfq  \cdot \vc{n}|_{\partial Q} = d(\vt)(\vt-\Theta_0),\ \mbox{and fix the total mass}\ \int_Q\vr\dx=M_0,
\end{equation}
where $\Theta_0\in L^1(\partial Q)$ is strictly positive, $M_0>0$ and we suppose that there are $\underline d,\overline d>0$ such that
\begin{align}\label{beq:2}
\underline d\vt\leq d(x,\vt)\leq \overline d \vt\quad \text{for all}\quad (x,\vt)\in \partial Q\times [0,\infty).
\end{align}

In view of Gibb's relation (\ref{m97}),
the internal energy equation \eqref{eq:13} can be rewritten in the form of the entropy balance
\begin{align}\label{eq:13'}
\dd(\varrho s)+\Big[\Div(\varrho s\bfu)+\Div\Big(\frac{\bfq}{\vartheta}\Big)\Big]\dt=\sigma\dt 
\end{align}
with the entropy production rate
\begin{align}\label{eq:0202}
\sigma=\frac{1}{\vartheta}\Big(\bfS:\nabla\bfu-\frac{\bfq\cdot\nabla\vartheta}{\vartheta}\Big).
\end{align}
In view of possible singularities, it is convenient
to relax the equality sign in \eqref{eq:0202} to the inequality
\begin{align}\label{eq:0202bis}
\sigma \geq \frac{1}{\vartheta}\Big(\bfS:\nabla\bfu-\frac{\bfq\cdot\nabla\vartheta}{\vartheta}\Big).
\end{align}

The system is augmented by the total energy balance
\begin{equation} \label{eq0202bisE}
\dd \int_{Q} \left[ \frac{1}{2} \vr |\vu|^2 + \vr e \right] \dx  = \int_{Q} \vr \mathbb{F} \cdot \vu \ \dd W + \sum_{k\geq1}\int_{Q} \frac{1}{2} \vr |\vc{F}_k|^2 \dx\dt-\int_{\partial Q}d(\vt)(\vt-\Theta_0)\dH\dt,
\end{equation}
cf. \cite[Chapter 2]{F}. In case of a stationary solution applying expectations to \eqref{eq0202bisE}
clearly yields
\begin{equation*}
\sum_{k\geq1}\E\int_{Q} \frac{1}{2} \vr |\vc{F}_k|^2 \dx\dt=\E\int_{\partial Q}d(\vt)(\vt-\Theta_0)\dH\dt,
\end{equation*}
meaning energy created by the stochastic forcing can leave through the boundary.
The existence theory from \cite{BF}, which leans on the analysis of the isentropic stochastic Navier--Stokes equations from \cite{BrHo} and the deterministic Navier--Stokes--Fourier equations
from \cite{F}, can be applied to \eqref{eq:1}--(\ref{beq:1}) without essential differences.
In case of the initial value problem an energy estimate can be derived in terms of the initial data.
Looking for stationary solutions, the initial data is not known and one has to use stationarity
instead. In \cite{BFHM} stationarity is used in combination with pressure estimates to obtain
a corresponding estimate for the isentropic problem. When applying the same strategy to the non-isentropic problem \eqref{eq:1}--\eqref{m97},
supplemented with homogeneous boundary conditions for the temperature flux,
the temperature is deemed to grow unboundedly due to the irreversible transfer of the mechanical energy into heat.

Assuming the non-homogeneous boundary conditions (\ref{beq:1}) instead we are able to derive new global-in-time energy estimates, see \eqref{eq:key}. The main task is to control the radiation  energy given by $a\vt^4$ without an information on the initial data. In the case of homogeneous boundary conditions one can only obtain informations on the temperature gradient which is not enough to even get estimates for $\vt$ in $L^1$. For the non-homogeneous problem we benefit from the boundary term in the energy balance \eqref{eq0202bisE}. A suitable application of It\^{o}'s formula combined with Sobolev's embedding and an interpolation argument allows to control a higher power of the temperature in terms of the energy, see \eqref{eq:key}. Finally, we derive some pressure estimate by means of the Bogovskii operator in \eqref{pressure:ep} and \eqref{pressure:delta} to close the argument and to obtain uniform-in-time estimates for the total energy.
This leads to our main result which is the existence of stationary martingale solutions to \eqref{eq:1}--(\ref{beq:1}), see Theorem \ref{Tm1} for the precise statement.

In order to make the ideas just explained rigorous one has to regularise the system
by adding artificial viscosity to the continuity equation \eqref{eq:12} ($\ep$-layer)
and add a high power of the pressure in the momentum equation \eqref{eq:11} ($\delta$-layer).
The resulting system has been solved in \cite{BF} by adding three additional layers.
The same tedious strategy has been applied \cite{BFHM} in the construction of stationary solutions
to the isentropic system. Here, we follow a different strategy with a much simpler proof. Namely, inspired by the approach due to It\^o-Nisio \cite{IN} which we recently also applied to the isentropic system with hard sphere pressure \cite{BFH2020}, we construct stationary solutions directly on the $\ep$-level. 
The first step is to show uniform-in-time estimates for martingale solutions to the initial value problem. In a second step
stationary solutions can be constructed by the Krylov-Bogoliubov method
as the narrow limit of time-averages. A striking feature of this approach is that stationary solutions are sitting on the trajectory space and are approached asymptotically in time by any
solution starting with bounded initial data of certain moments. With a stationary solution to the approximate system at 
hand
one can prove estimate \eqref{eq:key} which is uniform in time, $\varepsilon$
and $\delta$. It has to be combined with pressure estimates which differ on both levels, see \eqref{pressure:ep} and \eqref{pressure:delta}, before one can pass to the limit (both limits have to be done independently). The limit passage
can be performed as in previous papers and stationarity is preserved in the limit.

\section{Mathematical framework and the main result}
\label{sec:framework}

\subsection{Stochastic forcing}

The process $W$ is a cylindrical Wiener process on a separable Hilbert space $\mathfrak U$, that is, $W(t)=\sum_{k\geq1}\beta_k(t) e_k$ with $(\beta_k)_{k\geq1}$ being mutually independent real-valued standard Wiener processes relative to $(\mf_t)_{t\geq0}$. Here $(e_k)_{k\geq1}$ denotes a complete orthonormal system in $\mathfrak{U}$. In addition, we introduce an auxiliary space $\mathfrak{U}_0\supset\mathfrak{U}$ via
$$\mathfrak{U}_0=\bigg\{v=\sum_{k\geq1}\alpha_k e_k;\;\sum_{k\geq1}\frac{\alpha_k^2}{k^2}<\infty\bigg\},$$
endowed with the norm
$$\|v\|^2_{\mathfrak{U}_0}=\sum_{k\geq1}\frac{\alpha_k^2}{k^2},\qquad v=\sum_{k\geq1}\alpha_k e_k.$$
Note that the embedding $\mathfrak{U}\hookrightarrow\mathfrak{U}_0$ is Hilbert-Schmidt. Moreover, trajectories of $W$ are $\prst$-a.s. in $C([0,T];\mathfrak{U}_0)$ (see \cite{daprato}).

Choosing $\mathfrak{U}= \ell^2$ we may identify the diffusion coefficients
$(\mathbb {F} e_k )_{k \geq 1}$ with a sequence of real functions
$(\vc{F}_k)_{k \geq 1}$,
\[
\vr \mathbb{F}(\vr, \vt, \vu) \dd W = \sum_{k=1}^\infty \vr \vc{F}_k(x, \vr, \vt, \vu) \dd \beta_k.
\]
\\
We suppose that $\vc{F}_k$ are smooth in their arguments, specifically,
\[
\vc{F}_k \in C^1( \Ov{Q} \times [0, \infty)^2 \times R^3; R^3),
\]
where 
\begin{equation} \label{P-1}
\| {\vc{F}_k} \|_{L^\infty} + \| {\nabla_{x,\vr,\vt, \vc{u}}} \vc{F}_k \|_{L^\infty} \leq f_k,\ \sum_{k=1}^\infty f_k^2 < \infty.
\end{equation}
We easily deduce from \eqref{P-1} the following bound
\[
\| \vr \vc{F}_k (\vr, \vt, \vu) \|_{W^{-k,2}(Q; R^3)} \aleq \| \vr \vc{F}_k (\vr, \vt, \vu)  \|_{L^1 (Q; R^3)} \aleq f_k
 \| \vr \|_{L^1(Q)} 
\]
whenever $k > \frac{3}{2}$.
Accordingly, the stochastic integral
\[
\int_0^\tau \vr \mathbb{F} \dd W = \sum_{k = 1}^\infty \int_0^\tau \vr \vc{F}_k(\vr, \vt, \vu) \ \dd \beta_k
\]
can be identified with an element of the {Banach space space $C([0,T]; W^{-k,2}(Q))$},
\[
\begin{split}
&\intQ{ \left( \int_0^\tau \vr \mathbb{F}(\vr, \vt, \vu) \dd W \cdot \bfvarphi \right) } \\ &= \sum_{k=1}^\infty \int_0^\tau \left(
\intQ{ \vr \vc{F}_k(x,\vr, \vt, \vu) \cdot \varphi } \right) \dd \beta_k, \ \bfvarphi \in W^{k,2}(Q; R^3),\ k > \frac{3}{2}.
\end{split}
\]

\subsection{Structural and constitutive assumptions}
\label{H}

Besides Gibbs' equation (\ref{m97}), we impose several restrictions on the specific shape of the thermodynamic functions
$p=p(\vr,\vt)$, $e=e(\vr,\vt)$ and $s=s(\vr,\vt)$. They are borrowed from \cite[Chapter 1]{F}, to which we refer
for the physical background and the relevant discussion.

We consider the pressure $p$ in the form
\bFormula{m98}
p(\vr, \vt) = p_M(\vr, \vt) + \frac{a}{3} \vt^4, \ a > 0,\
p_M(\vr, \vt) = \vt^{5/2} P\left( \frac{\vr}{\vt^{3/2}} \right),
\eF
\begin{equation} \label{mp8a}
e(\vr, \vt) = e_M(\vr, \vt) + {a} \frac{\vt^4}{\vr},\ e_M(\vr, \vt) = \frac{3}{2} \frac{p_M(\vr, \vt)}{\vr} =
\frac{3}{2} \frac{\vt^{5/2}}{\vr} P\left( \frac{\vr}{\vt^{3/2}} \right),
\end{equation}
\begin{equation}\label{md8!}
s(\vr, \vt) = s_M(\vr, \vt) + \frac{4a }{3} \frac{\vt^3}{\vr},\
s_M(\vr, \vt) = S \left( \frac{\vr} {\vt^{3/2}} \right),
\end{equation}
\begin{equation}\label{md8}
S = S(Z),\
S'(Z)=-\frac 32 \frac{\frac 53 P(Z)-ZP'(Z)}{Z^2}<0,\ \lim_{Z\to\infty} S(Z)=0,
\end{equation}
where
\begin{equation}\label{m103-}
P \in C^1[0, \infty) \cap C^2(0, \infty),\; P(0)= 0,\;P'(Z)>0, \ \mbox{for all}\ {Z \geq 0},
\end{equation}
\begin{equation}\label{md7}
0<\frac 32 \frac{\frac 53 P(Z)-ZP'(Z)}{Z}  <c,\; \mbox{for all}\ Z > 0,
\end{equation}
and
\begin{equation}\label{md1}
\lim_{Z\to\infty}\frac {P(Z)} {Z^{5/3}}=p_\infty>0.
\end{equation}

As shown in \cite[Section 3.2]{F} the assumptions above imply that there is $c>0$ such that
\begin{align}\label{3.32}
& c^{-1} \vr^{5/3}\leq p_M(\vr,\vt)\leq \,c(\vr^{5/3}+\vr\vt),\\
&\frac{3p_\infty}{2}\vr^{5/3}+ a\vt^4\leq \vr e(\vr,\vt),\label{3.28}\\
& 0\leq e_M(\vr,\vt)\leq \,\overline c(\vr^{2/3}+\vt),\label{3.30}
\end{align}
for all $\vt,\vr>0$. Moreover, there is $s_\infty>0$ such that
\begin{align}
&0\leq s_M(\vr,\vt)\leq\,s_\infty (1+|\log(\vr)|+ [\log(\vt)]^+ ).\label{3.39}
\end{align}
Finally, for $\overline\vt>0$ we  introduce \emph{ballistic free energy} given by
\[
H_{\overline{\vt}}(\vr, \vt) = \vr \left( e(\vr, \vt) - \overline{\vt} s (\vr, \vt) \right),
\]
which satisfies
\begin{align}\label{est:H}
-c(\vr+1)+\frac{1}{4}\big(\vr e(\vr,\vt)+\overline\vt|s(\vr,\vt)|\big)\leq H_{\overline{\vt}}(\vr, \vt)\leq\,c\big(\vr^{5/3}+\vt^4+1\big)
\end{align}
on account of \eqref{3.30}, \eqref{3.39} and \cite[Prop. 3.2]{F}.
The viscosity coefficients $\mu$, $\eta$ are continuously
differentiable functions of the absolute temperature $\vt$, more
precisely $\mu, \ \lambda  \in C^1[0,\infty)$, satisfying
\bFormula{m105} 0 < \underline{\mu}(1 + \vartheta) \leq
\mu(\vartheta) \leq \Ov{\mu}(1 + \vartheta), \eF
\bFormula{*m105*} \sup_{\vartheta\in [0, \infty)}\big(|\mu'(\vartheta)|+|\lambda'(\vartheta)|\big)\le
\overline m, \eF
\bFormula{m106} 0 \leq \lambda (\vartheta) \leq \Ov{\lambda}(1 +
\vartheta). \eF
\\
The heat conductivity coefficient $\kappa \in C^1[0, \infty)$ satisfies
\bFormula{m108} 0 < \underline{\kappa} (1 + \vartheta^3) \leq \kappa(
\vartheta) \leq \Ov{\kappa} (1 + \vartheta^3).
\eF

Finally, we introduce certain regularised versions of $p,e,s$ and $\kappa$ for fixed $\delta>0$:
\begin{equation} \label{neu}
\begin{split}
p_\delta(\vr,\vt)&=p (\vr,\vt) + \delta({\vr^2} + \vr^\Gamma), \\
e_{M, \delta}(\vr, \vt) &= e_M(\vr, \vt) + \delta \vt,\
e_\delta(\vr, \vt) = e(\vr, \vt) + \delta \vt, \\
s_{M,\delta}(\vartheta,\varrho)  &=s_M(\vartheta,\varrho)+\delta\log\vartheta,\quad s_\delta(\vr, \vt) = s(\vr, \vt) + \delta \log(\vt),\\ \kappa_\delta(\vartheta )&=\kappa(\vartheta )+\delta\Big(\vt^\Gamma+\frac{1}{\vartheta}\Big),\ \cal{K}_\delta(\vt) = \int_0^\vt \kappa_\delta(z) \ {\rm d}z.
\end{split}
\end{equation}

\subsection{Martingale \& stationary solutions}
\label{subsec:solution}

We start with a rigorous definition of \emph{(weak) martingale solution} to problem \eqref{eq:1}--(\ref{beq:1}) as given in \cite{BF}, where also the existence of a solution to the corresponding initial value problem is proved.

\begin{definition}[Martingale solution]\label{def:sol}
Let $Q \subset R^3$ be a bounded domain of class $C^{2 + \nu}$, $\nu > 0$.
Then
$$\big((\Omega,\mf,(\mf_t),\prst),\varrho,\vartheta,\bfu,W)$$
is called \emph{(weak) martingale solution} to problem \eqref{eq:1}--(\ref{beq:1}) provided the following holds.
\begin{enumerate}
\item $(\Omega,\mf,(\mf_t),\prst)$ is a stochastic basis with a complete right-continuous filtration;
\item $W$ is an $(\mf_t)$-cylindrical Wiener process;
\item the random variables
\[
\vr \in L^1_{\rm loc}([0,\infty);L^1(Q)),\ \vt \in L^1_{\rm loc}([0,\infty);L^1(Q)),\ \vu \in L^2_{\rm loc}([0,\infty); W^{1,2}_0(Q; R^3))
\]
are $(\mf_t)$-progressively measurable\footnote{The progressive measurability is understood in the sense of random distributions as introduced in \cite[Section 2.2]{BFHbook}.}, $\vr \geq 0$, $\vt > 0$ $\pas$;
\item the equation of continuity
\begin{align}\label{wWS237final}
\int_0^\infty \intQ{ \left[ \vr \partial_t \psi + \vr \vu \cdot \Grad \psi \right] }\dt = 0;
\end{align}
holds for all $\psi\in \DC((0,\infty) \times R^3)$ $\prst$-a.s.;
\item the momentum equation
\begin{align}\label{wWS238final}
\begin{aligned}
&\int_0^\infty \partial_t \psi \intQ{ \vr \vu \cdot \bfvarphi } \dt\\
&+\int_0^\infty \psi \intQ{ \varrho\bfu\otimes\bfu : \nabla \bfvarphi } \dt -\int_0^T \psi \intQ{ \mathbb S(\vartheta,\nabla\bfu) : \nabla\bfvarphi } \dt\\
&+\int_0^\infty \psi \intQ{ p(\varrho,\vartheta) \diver\bfvarphi }\dt + \int_0^\infty \psi \intQ{ \varrho{\vc{F}}(\varrho, \vt , \bfu) \cdot
\bfvarphi }\,\dif W =0;
\end{aligned}
\end{align}
holds for all $\psi \in \DC(0,\infty)$,
 $\bfvarphi\in \DC (Q;R^3)$ $\prst$-a.s.
\item the entropy balance
\begin{align} \label{m217*final}\begin{aligned}
-
 \int_0^\infty &\intQ{ \left[\varrho s(\varrho,\vartheta) \partial_t \psi + \varrho s (\varrho,\vartheta)\bfu \cdot \nabla\psi \right]}\dif t\\\geq & \int_0^\infty\int_{Q}
\frac{1}{\vartheta}\Big[\mathbb S(\vartheta,\nabla\bfu):\nabla\bfu+\frac{\kappa(\vartheta)}{\vartheta}|\nabla\vartheta|^2\Big]
\psi\,\dif x\,\dif t \\
&- \int_0^\infty\intQ{ \frac{\kappa(\vartheta)\nabla\vartheta}{\vartheta} \cdot \nabla\psi }\,\dif t
-\int_0^\infty\int_{\partial Q}\psi\frac{d(\vt)}{\vt}(\vt-\Theta_0)\dH\dt
\end{aligned}
\end{align}
holds for all
$\psi\in \DC((0,\infty)\times R^3)$, $\psi \geq 0$ $\prst$-a.s.;
\item the total energy balance
\begin{equation} \label{EI20final}
\begin{split}
- \int_0^\infty &\partial_t \psi 
\left( \intQ{ \mathcal E(\varrho,\vartheta, \vu) } \right) \ \dt = -\int_0^\infty\psi\int_{\partial Q}d(\vt-\Theta_0)\dH\dt\\&+\int_0^\infty  \psi \int_{Q}\varrho{\mathbb F}(\varrho,\vartheta,\bfu)\cdot\bfu\dx\,\dd W\dx + \frac{1}{2} \int_0^\infty
\psi \bigg(\intQ{ \sum_{k \geq 1}  \varrho| {\bf F}_k (\varrho,\vartheta,  {\bf u}) |^2 } \bigg) {\rm d}t
\end{split}
\end{equation}
holds for any $\psi \in \DC(0, \infty)$ $\mathbb{P}$-a.s.
Here, we abbreviated
$$\mathcal E(\varrho,\vartheta,\vu)= \frac{1}{2} \varrho | {\bf u} |^2 + \varrho e(\varrho,\vartheta).$$
\end{enumerate}
\end{definition}

In the following we are going to introduce the concept of stationary martingale solutions.
We start with a standard definition of stationarity for stochastic processes with values in Sobolev spaces.

\begin{Definition}[Classical stationarity]\label{D2}
Let $\bfU=\{\bfU(t);t\in[0,\infty)\}$ be an $W^{k,p}(\mt)$-valued measurable stochastic process, where $k\in\mn_0$ and $p\in[1,\infty)$. We say that $\bfU$ is \emph{stationary} on $W^{k,p}(\mt)$ provided the joint laws
$$\mathcal{L}(\bfU(t_1+\tau),\dots, \bfU(t_n+\tau)),\quad \mathcal{L}(\bfU(t_1),\dots, \bfU(t_n))$$
on $[W^{k,p}(\mt)]^n$ coincide for all $\tau\geq0$, for all $t_1,\dots,t_n\in [0,\infty)$.

\end{Definition}
As can be seen from Definition \ref{def:sol}, the velocity $\vu$ and the temperature $\vt$ are not stationary in the sense of Definition \ref{D2} as they are only equivalence classes in time.
Therefore we use the following definition of stationarity which has been introduced in \cite{BFHM},
and applies to random variables ranging in the
space $L^q_{\rm loc}([0, \infty);W^{k,p}(\mt))$.

\begin{Definition}[Weak stationarity]\label{D1}
Let $\bfU$ be an $L^q_{\rm loc}([0,\infty);W^{k,p}(\mt))$-valued random variable, where Let $k\in\mn_0$ and $p,q\in[1,\infty)$. Let $\mathcal S_\tau$ be the time shift on the space of trajectories given by $\mathcal{S}_\tau \bfU(t)=\bfU(t+\tau).$
We say that $\bfU$ is \emph{stationary} on $L^q_{\rm loc}([0,\infty);W^{k,p}(\mt))$ provided the laws
$\mathcal{L}(\mathcal{S}_\tau\bfU),$ $ \mathcal{L}(\bfU)$
on $L^q_{\rm loc}([0,\infty);W^{k,p}(\mt))$ coincide for all $\tau\geq0$.

\end{Definition} 
Definition \ref{D2} and Definition \ref{D1} are equivalent as soon as the stochastic process in question is
continuous in time; or alternatively, if it is weakly continuous and satisfies a suitable uniform bound, cf. \cite[Lemma A.2 and Corollary A.3]{BFHM}. Furthermore, it can be shown that both notions of stationarity are stable under weak convergence as can be seen from the following two lemmas (the proofs of which can be found in \cite[Appendix]{BFHM}).
\begin{Lemma}\label{lem:stac}
Let $k\in\mn_0, p,q\in[1,\infty)$ and let $(\bfU_m)$ be a sequence of random variables taking values in $L^q_{\rm loc}([0,\infty);W^{k,p}(\mt)))$. If, for all $m\in\mn$, $\bfU_m$ is stationary on $L^q_{\rm loc}([0,\infty);W^{k,p}(\mt))$ in the sense of Definition \ref{D1} and
\begin{align*}
\bfU_m\rightharpoonup \bfU\quad\text{in}\quad L^q_{\mathrm{loc}}([0,\infty);W^{k,p}(\mt))\quad\mathbb P\text{-a.s.,}
\end{align*}
then $\bfU$ is stationary on $L^q_{\rm loc}([0,\infty);W^{k,p}(\mt))$.
\end{Lemma}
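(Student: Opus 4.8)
The plan is to reduce the claim to a statement about finite-dimensional distributions of real-valued random variables, where the almost sure convergence in the hypothesis can be used directly. Write $X:=L^q_{\rm loc}([0,\infty);W^{k,p}(\mt))$, a separable Fr\'echet space whose topology is generated by the seminorms $u\mapsto\|u\|_{L^q([0,T];W^{k,p}(\mt))}$, $T>0$ (note that $\bfU$ is automatically an $X$-valued random variable, being an almost sure weak limit of such). I would first record the two structural facts that do all the work. (i) Since $X$ is separable, its Borel $\sigma$-algebra is generated by a countable family of continuous linear functionals on $X$, so two Borel probability measures on $X$ agree as soon as all their finite-dimensional distributions, formed by pairing with continuous linear functionals, agree (a monotone-class argument on the $\pi$-system of the associated cylinder sets). (ii) The shift $\mathcal S_\tau$ is a bounded linear operator on $X$, because $\|\mathcal S_\tau u\|_{L^q([0,T];W^{k,p}(\mt))}\le\|u\|_{L^q([0,T+\tau];W^{k,p}(\mt))}$ for every $T>0$; consequently $\mathcal S_\tau$ is continuous also from the weak topology of $X$ into itself.

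The argument then runs as follows. Fix $\tau\ge0$. By (ii), the hypothesis $\bfU_m\rightharpoonup\bfU$ $\mathbb P$-a.s. transfers to $\mathcal S_\tau\bfU_m\rightharpoonup\mathcal S_\tau\bfU$ $\mathbb P$-a.s.; in particular, for every continuous linear functional $\phi$ on $X$ one has $\langle\phi,\bfU_m\rangle\to\langle\phi,\bfU\rangle$ and $\langle\phi,\mathcal S_\tau\bfU_m\rangle\to\langle\phi,\mathcal S_\tau\bfU\rangle$ $\mathbb P$-a.s. Fix finitely many functionals $\phi_1,\dots,\phi_n$. Almost sure convergence implies convergence in law, so the $\mathbb R^n$-valued random vectors $(\langle\phi_i,\bfU_m\rangle)_{i=1}^n$ and $(\langle\phi_i,\mathcal S_\tau\bfU_m\rangle)_{i=1}^n$ converge in law, as $m\to\infty$, to $(\langle\phi_i,\bfU\rangle)_{i=1}^n$ and $(\langle\phi_i,\mathcal S_\tau\bfU\rangle)_{i=1}^n$, respectively. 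On the other hand, the stationarity of $\bfU_m$ in the sense of Definition~\ref{D1} yields $\mathcal L(\mathcal S_\tau\bfU_m)=\mathcal L(\bfU_m)$ on $X$, so for each $m$ those two random vectors have the same law; passing to the limit, $(\langle\phi_i,\mathcal S_\tau\bfU\rangle)_{i=1}^n$ and $(\langle\phi_i,\bfU\rangle)_{i=1}^n$ have the same law on $\mathbb R^n$. As $n$ and $\phi_1,\dots,\phi_n$ are arbitrary, all finite-dimensional distributions of $\mathcal S_\tau\bfU$ and $\bfU$ coincide, hence $\mathcal L(\mathcal S_\tau\bfU)=\mathcal L(\bfU)$ on $X$ by (i); since $\tau\ge0$ was arbitrary, $\bfU$ is stationary on $X$.

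The only place where some care is needed is the functional-analytic bookkeeping underlying (i): checking that $X$ is separable and that its Borel $\sigma$-algebra is generated by a countable set of continuous linear functionals, so that equality of all finite-dimensional distributions really forces equality of the laws. Everything else---continuity, hence weak-to-weak continuity, of $\mathcal S_\tau$; the implication ``almost sure convergence $\Rightarrow$ convergence in law''; and the stability of an identity in law under passage to a limit---is soft and involves no estimates. (One could equivalently organize the proof around the continuous mapping theorem applied to $\mathcal S_\tau$ directly, but the functional-analytic input is the same.)
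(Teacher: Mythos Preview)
The paper does not provide its own proof of this lemma; it simply refers to the appendix of \cite{BFHM}. Your argument is correct and is essentially the standard route taken there: reduce to equality of finite-dimensional distributions via continuous linear functionals, use weak-to-weak continuity of the linear shift $\mathcal S_\tau$ to transport the almost sure weak convergence, and pass to the limit in the identity of laws. The one point you flag---that the Borel $\sigma$-algebra of the separable Fr\'echet space $L^q_{\rm loc}([0,\infty);W^{k,p}(\mt))$ coincides with the cylindrical $\sigma$-algebra generated by its dual---is indeed the only nontrivial ingredient, and it is a standard fact (Pettis/Hahn--Banach plus separability).
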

\begin{Lemma}\label{lem:stac2}
Let $k\in\mn_0$, $p\in[1,\infty)$ and let $(\bfU_m)$ be a sequence of $W^{k,p}(\mt)$-valued  stochastic processes which are stationary on $W^{k,p}(\mt)$ in the sense of Definition \ref{D2}. If for all $T>0$
\begin{equation}\label{eq:gh1}
\sup_{m\in\mn}\expe{\sup_{t\in[0,T]}\|\bfU_m\|_{W^{k,p}(\mt)}}<\infty
\end{equation}
and
\begin{align*}
\bfU_m\rightarrow \bfU\quad\text{in}\quad C_{\mathrm{loc}}([0,\infty);(W^{k,p}(\mt),w))\quad\mathbb P\text{-a.s.,}
\end{align*}
then $\bfU$ is stationary on $W^{k,p}(\mt)$.
\end{Lemma}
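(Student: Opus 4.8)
The statement to prove is Lemma \ref{lem:stac2}, concerning stability of classical stationarity (Definition \ref{D2}) under weak-$*$ convergence in $C_{\mathrm{loc}}([0,\infty);(W^{k,p}(\mt),w))$ together with a uniform moment bound.

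\textbf{Proof proposal.}

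The plan is to reduce the assertion to a statement about finite-dimensional joint laws and then to pass to the limit in characteristic functionals. First I would fix $\tau\geq 0$ and points $t_1,\dots,t_n\in[0,\infty)$, and let $T>0$ be large enough that all of $t_1,\dots,t_n$ and $t_1+\tau,\dots,t_n+\tau$ lie in $[0,T]$. Since $W^{k,p}(\mt)$ is reflexive and separable for $p\in(1,\infty)$ (the case $p=1$ needs a small separate remark, using that bounded sets are still metrizable in the weak topology of the relevant dual pairing, or one works in a slightly larger space), the space $(W^{k,p}(\mt),w)$ restricted to balls is a Polish space, and evaluation at a time point $t\mapsto \bfU_m(t)$ is a measurable map from $C_{\mathrm{loc}}([0,\infty);(W^{k,p}(\mt),w))$ into $(W^{k,p}(\mt),w)$. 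Hence the joint law $\mathcal L(\bfU_m(t_1),\dots,\bfU_m(t_n))$ on $[(W^{k,p}(\mt),w)]^n$ is well-defined, and by the stationarity hypothesis on $\bfU_m$ (Definition \ref{D2}) it coincides with $\mathcal L(\bfU_m(t_1+\tau),\dots,\bfU_m(t_n+\tau))$ for every $m$.

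Next I would test these equalities against bounded continuous functionals. For arbitrary $\varphi_1,\dots,\varphi_n\in W^{k,p}(\mt)^{*}$ and arbitrary $F\in C_b(\mr^n)$, consider the functional $\Psi(\omega)=F(\langle\varphi_1,\bfU(t_1)\rangle,\dots,\langle\varphi_n,\bfU(t_n)\rangle)$ on the trajectory space. Stationarity of $\bfU_m$ gives
\begin{align*}
\expe{F\big(\langle\varphi_1,\bfU_m(t_1)\rangle,\dots,\langle\varphi_n,\bfU_m(t_n)\rangle\big)}=\expe{F\big(\langle\varphi_1,\bfU_m(t_1+\tau)\rangle,\dots,\langle\varphi_n,\bfU_m(t_n+\tau)\rangle\big)}.
\end{align*}
The almost sure convergence $\bfU_m\to\bfU$ in $C_{\mathrm{loc}}([0,\infty);(W^{k,p}(\mt),w))$ implies $\langle\varphi_i,\bfU_m(s)\rangle\to\langle\varphi_i,\bfU(s)\rangle$ $\prst$-a.s. for each fixed $s$, so the integrands converge $\prst$-a.s.; since $F$ is bounded, dominated convergence yields
\begin{align*}
\expe{F\big(\langle\varphi_1,\bfU(t_1)\rangle,\dots,\langle\varphi_n,\bfU(t_n)\rangle\big)}=\expe{F\big(\langle\varphi_1,\bfU(t_1+\tau)\rangle,\dots,\langle\varphi_n,\bfU(t_n+\tau)\rangle\big)}.
\end{align*}
As $\varphi_1,\dots,\varphi_n$ and $F$ are arbitrary, the family of functionals $\{F(\langle\varphi_1,\cdot\rangle,\dots,\langle\varphi_n,\cdot\rangle)\}$ is measure-determining on $[(W^{k,p}(\mt),w)]^n$ (on bounded sets, the weak topology is generated by countably many such linear functionals, and products of bounded continuous functions of them generate the Borel $\sigma$-algebra), so the joint laws $\mathcal L(\bfU(t_1),\dots,\bfU(t_n))$ and $\mathcal L(\bfU(t_1+\tau),\dots,\bfU(t_n+\tau))$ coincide on $[W^{k,p}(\mt)]^n$. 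Since $\tau$, $n$ and the $t_i$ were arbitrary, $\bfU$ is stationary on $W^{k,p}(\mt)$ in the sense of Definition \ref{D2}.

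\textbf{Main obstacle.} The role of the uniform bound \eqref{eq:gh1} is the delicate point: without it, the a.s.\ limit $\bfU$ need not take values in $W^{k,p}(\mt)$ at all, and the weak-continuity-in-time structure could degenerate, so one uses \eqref{eq:gh1} to guarantee that $\bfU(t)\in W^{k,p}(\mt)$ $\prst$-a.s.\ for every $t$ (via Fatou/lower semicontinuity of the norm under weak convergence) and that one stays within bounded, hence metrizable, subsets of $(W^{k,p}(\mt),w)$, which legitimizes both the measurability of the evaluation maps and the use of a countable measure-determining family. Handling $p=1$ cleanly — where $W^{k,1}$ is not a dual space and weak topologies on balls are not automatically metrizable — is the technical wrinkle I expect to require the most care; one circumvents it by working with the weak topology inherited from a larger reflexive space (e.g.\ $W^{k-1,2}$ or $L^{1}$ paired against $L^{\infty}$) on which the uniform bound still provides the needed compactness and metrizability. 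Since this is a standard lemma, I would otherwise simply cite \cite[Appendix]{BFHM}.
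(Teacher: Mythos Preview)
The paper does not give its own proof of this lemma; it simply refers the reader to \cite[Appendix]{BFHM}, which you also cite at the end of your proposal. Your detailed argument---testing the finite-dimensional joint laws against bounded continuous cylinder functions $F(\langle\varphi_1,\cdot\rangle,\dots,\langle\varphi_n,\cdot\rangle)$, passing to the limit by dominated convergence, and then invoking that such functionals are measure-determining on a separable Banach space---is correct and is essentially the standard approach one finds in \cite{BFHM}.
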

In the following we define a stationary martingale solution to \eqref{eq:1}--(\ref{beq:1}).

\begin{Definition}\label{Dm2}
A weak martingale solution $[\vr,\vt,\vu, W]$ to \eqref{eq:1}--\eqref{beq:1} is called {\em stationary} provided 
the joint law of the time shift 
$
\left[\mathcal{S}_\tau\vr,\mathcal{S}_\tau\vt,  \mathcal{S}_\tau\vu, \mathcal{S}_\tau W - W (\tau)\right]$ on
$$ L^1_{\rm loc}([0, \infty); L^\gamma(\Q))\times L^1_{\rm loc}([0, \infty); W^{1,2}(Q)) \times L^1_{\rm loc}([0, \infty); W^{1,2}(Q; \R^3)) 
\times C([0, \infty); \mathfrak{U}_0 )
$$
is independent of $\tau \geq 0$.
\end{Definition}
We now state our main result concerning the existence of a stationary martingale solution to \eqref{eq:1}--(\ref{beq:1}).
\begin{Theorem} \label{Tm1}
Let $M_0 \in(0,\infty)$ be  given. Suppose that the structural assumptions \eqref{m98}--\eqref{m108} are in force and that the diffusion coefficient satisfies
$\mathbb F$ satisfies \eqref{P-1}.
Then problem \eqref{eq:1}--\eqref{beq:1} admits a {stationary martingale solution} in the sense of Definition \ref{Dm2}.
\end{Theorem}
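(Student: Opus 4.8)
The plan is to obtain the stationary solution as a limit, as $\ep\to0$ and then $\delta\to0$, of stationary solutions of a regularized system: the $\ep$-layer adds the artificial viscosity $\ep\Delta\vr$ to the continuity equation \eqref{eq:12}, while the $\delta$-layer replaces $p,e,s,\kappa$ by the regularized quantities $p_\delta,e_\delta,s_\delta,\kappa_\delta$ from \eqref{neu} (in particular the pressure gains the term $\delta(\vr^2+\vr^\Gamma)$). For fixed $\ep,\delta>0$ the associated initial value problem admits weak martingale solutions by \cite{BF}. In contrast with \cite{BF,BFHM}, which employ several further approximation layers, I would construct a \emph{stationary} solution of this $(\ep,\delta)$-system directly, by a Krylov--Bogoliubov argument in the spirit of \cite{IN,BFH2020}.

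\emph{Step 1: uniform-in-time estimates for the approximate initial value problem.} Combining the total energy balance \eqref{EI20final} with $\overline\vt$ times the entropy balance \eqref{m217*final}, tested against a temporal cut-off, one derives an inequality for the ballistic free energy $\int_Q H_{\overline\vt}(\vr,\vt)\,\dx$ with a fixed reference temperature $\overline\vt>0$, whose right-hand side consists of a martingale, of the It\^o correction $\tfrac12\sum_k\int_Q\vr|\vc{F}_k|^2\,\dx$, and of a boundary contribution bounded above, thanks to \eqref{beq:2}, by $-c\int_{\partial Q}\vt^2\,\dH+C$. The entropy production controls the dissipation $\|\nabla\vu\|_{L^2(Q)}^2+\|\nabla\vt^{3/2}\|_{L^2(Q)}^2+\|\nabla\log\vt\|_{L^2(Q)}^2$, so by a Sobolev trace inequality for $\vt^{3/2}$ the boundary term $-c\int_{\partial Q}\vt^2\,\dH$ turns into a genuine damping of $\int_Q H_{\overline\vt}\,\dx$ --- the mechanism absent for homogeneous boundary conditions. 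Since $\sum_k\int_Q\vr|\vc{F}_k|^2\,\dx\le\big(\sum_kf_k^2\big)M_0$ is controlled by the fixed mass \eqref{beq:1}, Gronwall's lemma together with \eqref{est:H} gives $\E\big[\int_Q\mathcal E(\vr,\vt,\vu)\,\dx\big](t)\le C$ uniformly in $t\ge0$, plus suitable higher moments and $\E\int_0^T(\text{dissipation})\,\dt\le C(1+T)$.

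\emph{Step 2: stationary solution of the $(\ep,\delta)$-system, the key estimate, and the limits.} Using these uniform-in-time bounds I would run the Krylov--Bogoliubov method: the time-averaged laws $\tfrac1T\int_0^T\mathcal L\big(\mathcal{S}_t\vr,\mathcal{S}_t\vt,\mathcal{S}_t\vu,\mathcal{S}_tW-W(t)\big)\,\dt$ are tight on the trajectory space (compact Sobolev embeddings in the space variable, equicontinuity in negative Sobolev norms from the equations), hence converge narrowly along $T_n\to\infty$ to a shift-invariant law; a Jakubowski--Skorokhod representation together with the standard limit passage then yields a stationary martingale solution of the $(\ep,\delta)$-system, stationarity being inherited via Lemma \ref{lem:stac} and Lemma \ref{lem:stac2}. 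For such a stationary solution $t\mapsto\E[\int_Q\mathcal E\,\dx]$ is constant, so the data-dependent terms drop out; applying It\^o's formula to $\varphi\big(\int_Q H_{\overline\vt}\,\dx\big)$ for a well-chosen concave $\varphi$, taking expectations and absorbing the It\^o correction into the dissipation, I arrive at the key estimate \eqref{eq:key}: a bound on a super-quartic power of $\vt$, in particular on the radiation energy $a\vt^4$, in terms of $\E[\int_Q\mathcal E\,\dx]$ and the data, by means of the Sobolev embedding $\vt^{3/2}\in W^{1,2}(Q)\hookrightarrow L^6(Q)$, the boundary trace control of $\vt^2$, and interpolation. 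This is complemented by a pressure estimate via the Bogovskii operator: testing the approximate momentum equation with $\Bog\big[\vr^\beta-\tfrac1{|Q|}\int_Q\vr^\beta\,\dx\big]$ for a small $\beta>0$ and exploiting stationarity to collapse the time integration gives \eqref{pressure:ep}, respectively \eqref{pressure:delta}, i.e.\ a bound on $\E\int_Q p_\delta\vr^\beta\,\dx$ --- hence on $\vr$ in $L^{5/3+\beta}(Q)$, and on $\delta\vr^{\Gamma+\beta}$ --- uniform in $\ep,\delta$. Inserting this into \eqref{eq:key} closes the energy estimate uniformly in $\ep$, $\delta$ and $t$, after which the limits $\ep\to0$ and then $\delta\to0$ are carried out exactly as in \cite{BF,BFHM} (effective viscous flux and oscillation defect measure for the density, strong convergence of the temperature from the entropy inequality), stationarity surviving each limit once more by Lemmas \ref{lem:stac}--\ref{lem:stac2}; the resulting object is the stationary martingale solution of Theorem \ref{Tm1}.

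I expect the main obstacle to be the key estimate \eqref{eq:key}: controlling the radiation energy $a\vt^4$ --- and in fact a genuinely higher power of $\vt$ --- with no information on the initial data forces one to balance carefully the It\^o correction term against the entropy and boundary dissipation and to keep all constants independent of $\ep$ and $\delta$. The Bogovskii pressure bound for the density is the second technical crux needed to close the loop.
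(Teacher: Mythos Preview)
Your overall architecture matches the paper --- regularize, derive uniform--in--time bounds for the $(\ep,\delta)$--system, build a stationary solution by Krylov--Bogoliubov, then derive the $\ep,\delta$--independent key estimate \eqref{eq:key} on the stationary solution, close it with a Bogovskii pressure bound, and pass to the limits. However, two of your mechanisms are off.

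\emph{Step 1 damping.} The boundary term $-c\int_{\partial Q}\vt^2\,\dH$ does \emph{not} damp the full ballistic energy: $\int_Q H_{\overline\vt}\,\dx$ contains $\vr^{5/3}$ and $\vr|\vu|^2$, which are invisible to any Sobolev/trace inequality for $\vt$. In the paper the uniform--in--time bound (Proposition \ref{prop:en}) is obtained via the \emph{artificial} damping terms $-2\ep\vr$, $2\ep M_\ep$ inserted into the approximate continuity equation \eqref{L1} (borrowed from \cite{BFHM}); these produce the term $2\ep\int_Q[\delta\vr^2+\tfrac{\delta\Gamma}{\Gamma-1}\vr^\Gamma+\tfrac12\vr|\vu|^2]\,\dx$ in \eqref{b2a}, which dominates $\int_Q E_H^{\delta,\overline\vt}\,\dx$ up to remainders absorbed into $\ep\int_Q\vt^5$ and $\sigma_{\ep,\delta}$. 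The resulting bound $E_\infty=E_\infty(n,\ep,\delta,\overline\vt)$ is explicitly $\ep,\delta$--dependent, which is all that is needed at this stage. If you only add $\ep\Delta\vr$ you will not get a closed Gronwall inequality.

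\emph{The key estimate \eqref{eq:key}.} A concave $\varphi$ cannot produce a super--quartic power of $\vt$: with $\varphi'$ bounded you gain nothing beyond the $n=1$ balance. The paper instead takes $n=2$ in \eqref{b2a} (i.e.\ $\varphi(x)=x^2$, convex) and uses stationarity to kill the time derivative. The crucial point is the \emph{product structure}
\[
\expe{\int_Q E_H^{\delta,\overline\vt}\,\dx\;\Big(\int_Q\sigma_{\ep,\delta}\,\dx+\int_{\partial Q}\vt^2\,\dH\Big)}
\ \gtrsim\ \expe{\|\vt\|_{L^4(Q)}^4\,\|\vt\|_{L^6(Q)}}\ \geq\ \expe{\|\vt\|_{L^{30/7}(Q)}^5},
\]
where the boundary term together with $\int_Q\tfrac{\kappa(\vt)}{\vt^2}|\nabla\vt|^2\,\dx$ controls $\|\vt\|_{W^{1,2}(Q)}\gtrsim\|\vt\|_{L^6(Q)}$, and $\int_Q E_H^{\delta,\overline\vt}\,\dx\geq a\|\vt\|_{L^4(Q)}^4$; the It\^o correction (positive here) and the remaining $\ep,\delta$--terms are estimated by $\E\int_Q E_H^{\delta,\overline\vt}\,\dx$, uniformly in $\ep,\delta$. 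This is exactly where the non--homogeneous boundary condition enters in an essential, $\ep$--independent way --- not in Step~1. With \eqref{eq:key} in hand, the Bogovskii argument (test function $\mathcal B[\vr-M_\ep]$ at the $\ep$--level, $\mathcal B[\vr^\alpha-\fint\vr^\alpha]$ with small $\alpha$ at the $\delta$--level) closes the loop as you describe.
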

The proof of Theorem \ref{Tm1} is split into several parts. In the next section
we study the approximate system with regularisation parameters $\varepsilon$ and $\delta$. The proof will be completed
in Section \ref{sec:lim} after passing to the limit in $\ep$ and $\delta$.

\section{The viscous approximation}
\label{sec:vis}

In this section we study the viscous approximation to \eqref{eq:1}--\eqref{beq:1}, where the continuity equation contains an artificial diffusion ($\ep$-layer) and the pressure is stabilised by an artificial high power to the density ($\delta$-layer). In addition to the common
terms we add additional stabilising quantities in the continuity equations as in \cite{BFHM}, see \eqref{L1} below.

\subsection{Martingale solutions}
In this subsection we give a precise formulation of the approximated problem.
For this purpose we introduce a cut-off function
\[
\chi \in C^\infty(R), \ \chi(z) = \left\{ \begin{array}{l} 1 \ \mbox{for} \ z \leq 0, \\ \chi'(z) \leq 0 \ \mbox{for}\ 0 < z < 1, \\
\chi(z) = 0 \ \mbox{for}\ z \geq 1. \end{array} \right.
\]
We denote by $M_\ep$ the unique solution to the equation $2\ep z=\chi(z/M_0)$ which obviously satisfies $M_\ep\leq M_0$.
Finally, the diffusion coefficients are regularised by replacing $\vc{F}$ by $\vc{F}_\ep$,
\[
\mathbb{F}_{\ep} = \left( \vc{F}_{k,\ep} \right)_{k \geq 1},\
\vc{F}_{k,\ep}(x,\vr, \vt, \vu) = \chi \left( \frac{\ep}{\vr} - 1 \right) \chi \left(|\bfu|- \frac{1}{\ep} \right)
\vc{F}_k(x, \vr, \vt, \vu).
\]
 Let us start with a precise formulation of the problem.

\begin{itemize}
\item {\bf Regularized equation of continuity.}
\begin{equation} \label{L1}
\begin{split}
\int_{0}^{\infty} &\intO{ \left[ \vr \partial_t \varphi + \vr \vu \cdot \Grad \varphi \right] } \dt \\ &= \ep \int_0^\infty \intO{ \left[\Grad \vr \cdot \Grad \varphi
- 2 \vr \varphi \right] }\dt - 2\ep \int_0^\infty \intO{ M_{\ep} \varphi } \dt
\end{split}
\end{equation}
for any $\varphi \in \DC((0, \infty) \times \mt)$ $\prst$-a.s. 
\item {\bf Regularized momentum equation.}
\begin{equation} \label{L2}
\begin{split}
\int_0^\infty \partial_t \psi&\int_Q  \vr \vu \cdot \varphi\dx \dt  + \int_0^\infty\psi  \intO{ \vr \vu \otimes \vu : \Grad \varphi } \dt
+ \int_0^\infty \psi \intO{ p_\delta(\vt,\vr)  \Div \varphi } \dt  \\
&-   \int_0^\infty\psi  \intO{ \mathbb{S}_\delta(\vt,\Grad \vu) : \Grad \varphi} \dt  - \ep \int_0^\infty \psi \intO{  \vr \vu \cdot \Delta \varphi } \dt \\&- 2 \ep \int_0^\infty \psi \intO{ \vr \vu \cdot \varphi } \ \dt
=- \int_0^\infty \psi \intO{ \vr\mathbb{F}_\ep(\vr,\vt,\bfu) \cdot \varphi } \ \D W
\end{split}
\end{equation}
for any $\psi \in \DC((0, \infty))$, $\varphi \in C^\infty(\mt; \R^3)$ $\prst$-a.s.
\item {\bf Regularized entropy balance.}
\begin{equation} \label{L3}
\begin{aligned}
-
 \int_0^\infty &\intQ{ \left[\varrho s_\delta(\varrho,\vartheta) \partial_t \psi + \varrho s_\delta (\varrho,\vartheta)\bfu \cdot \nabla\psi \right]\,\varphi}\dif t\\\geq & \int_0^\infty\int_{Q}
\frac{1}{\vartheta}\Big[\mathbb S(\vartheta,\nabla\bfu):\nabla\bfu+\frac{\kappa_\delta(\vartheta)}{\vartheta}|\nabla\vartheta|^2+\delta\frac{1}{\vt^2}\Big]
\psi\,\dif x\,\dif t \\
&+ \int_0^\infty\psi\intQ{ \frac{\kappa_\delta(\vartheta)\nabla\vartheta}{\vartheta} \cdot \nabla\varphi }\,\dif t-\int_0^\infty\psi\int_{\partial Q}\varphi\frac{d(\vt)}{\vt}(\vt-\Theta_0)\dH\dt\\
&-\ep \int_0^\infty\psi\int_Q\left[ \left( \vt s_{M,\delta} (\vr, \vt) - e_{M, \delta} (\vr, \vt) - \frac{p_M (\vr, \vt) }{\vr} \right) \frac{ \Grad \vr}{\vt} \right]\cdot\nabla\varphi\dxt\\
&+\int_0^\infty\psi\int_Q\left[ \frac{\ep \delta}{2 \vartheta} ( \beta \varrho^{\beta - 2} + 2)
|\Grad \varrho|^2+\ep \frac{1}{\vr \vt} \frac{\partial p_M}{\partial \vr} (\vr, \vt) |\Grad \vr|^2 - \ep \vartheta^4\right]\varphi \dxt\\
&+\int_0^\infty\psi\int_Q\big(-2\varepsilon \varrho +2\ep M_\ep\big)\frac{1}{\vt}\Big( \vartheta
s_{M,\delta}(\varrho, \vartheta) - e_{M,\delta}(\varrho, \vartheta) -
\frac{p_M(\varrho, \vartheta)}{\varrho} \Big)\,\varphi\dxt
\end{aligned}
\end{equation}
for any $\psi \in \DC((0, \infty))$, $\varphi \in C^\infty(\mt; \R^3)$ $\prst$-a.s.
\item {\bf Regularized total energy balance.}
\begin{equation} \label{L4}
\begin{split}
- \int_0^\infty& \partial_t \psi
\bigg(\int_{Q} {\mathcal E_\delta(\varrho,\vartheta, \vu) } \dx\bigg) \dt
 + \int_0^\infty {\psi} \int_{Q} \ep \vartheta^5 \dt \\&+2 \ep\int_0^T\psi \intO{ \left[  \delta\vr^2 + \frac{\delta \Gamma }{\Gamma - 1} \vr^{\Gamma}+\frac{1}{2}\vr|\bfu|^2 \right] } \dt+\int_0^\infty\psi\int_{\partial Q}d(\vt-\Theta_0)\dH\dt \\
&= \,\int_0^\infty\int_{Q}\frac{\delta}{\vartheta^2}\psi\dx\dt+\int_0^\infty\psi \ep M_\ep \intO{ \left( 2\delta \vr + \frac{\delta \Gamma}{\Gamma - 1} \vr^{\Gamma - 1}
 +\frac{1}{2}|\bfu|^2\right) }  \dt\\
&+ \frac{1}{2} \int_0^T
\psi \bigg(
\intQ{ \sum_{k \geq 1} \varrho | {\bf F}_{k,\ep} (\varrho,\vartheta, {\bf u}) |^2  } \bigg) {\rm d}t+\int_0^\infty  \psi\int_{Q}\varrho{\vc{F}_\ep}(\varrho,\vartheta,\bfu)\cdot\bfu\,\dd W\dx
\end{split}
\end{equation}
holds for any $\psi \in \DC(0, \infty)$ $\mathbb{P}$-a.s., where we have set
\begin{align*}
\mathcal E_\delta=\frac{1}{2}\vr|\bfu|^2+\vr e_\delta(\vr,\vt)+\delta\Big(\vr^2+\frac{1}{\Gamma-1}\vr^\Gamma\Big).
\end{align*}
\end{itemize}

we have the following result.

\begin{Proposition}\label{prop:1608b}
Let $\varepsilon,\delta>0$ be given.
Then there exists a weak martingale solution $[\varrho_\varepsilon,\vte,\bfu_\varepsilon]$ to \eqref{L1}--\eqref{L4}.
In addition, for $n\in\mn$ and  every  $\psi\in C^\infty_c((0,\infty))$, $\psi\geq 0$, the following generalized energy inequality holds true
 \begin{align} \label{b2a}
\begin{aligned}
-&\int_0^\infty\partial_t\psi \Big[  \intQ{ E_H^{\delta,\overline{\vt}}(\varrho,\vartheta)  }\Big]^n\dt\\
&+n\overline{\vt} \int_{0}^{\infty}\psi\Big[  \intQ{ E_H^{\delta,\overline{\vt}}(\varrho,\vartheta)  (\tau, \cdot) }\Big]^{n-1}\int_{\mt}\sigma_{\varepsilon,\delta}\dx\dt\\
&+ n\int_{0}^{\infty}\psi\Big[  \intQ{ E_H^{\delta,\overline{\vt}}(\varrho,\vartheta)  (\tau, \cdot) }\Big]^{n-1}\int_{Q}\ep \vartheta^5  \dt\\
&+n\int_{0}^{\infty}\psi\Big[  \intQ{ E_H^{\delta,\overline{\vt}}(\varrho,\vartheta)  (\tau, \cdot) }\Big]^{n-1}\int_{\partial Q}\frac{d(\vt)}{\vt}\Big(\vt-\overline{\vt}\Big)(\vt-\Theta_0)\dH\dt\\
&+ 2\ep n\overline\vt\E\int_{0}^{\infty}\psi\Big[  \intQ{ E_H^{\delta,\overline{\vt}}(\varrho,\vartheta)  (\tau, \cdot) }\Big]^{n-1}\intO{ \left[  \delta\vr^2 + \frac{\delta \Gamma }{\Gamma - 1} \vr^{\Gamma} +\frac{1}{2}\vr|\bfu|^2\right] }  \dt\\
&+2\varepsilon n \int_{0}^{\infty}\psi \Big[  \intQ{ E_H^{\delta,\overline{\vt}}(\varrho,\vartheta)  (\tau, \cdot) }\Big]^{n-1}\int_{Q}\varrho\Big( \frac{p_M(\varrho, \vartheta)}{\varrho\vt}
+\frac{ e_{M,\delta}(\varrho, \vartheta)}{\vt} -s_{M,\delta}(\varrho, \vartheta) 
 \Big)\dxt\\
&+n\varepsilon\int_{0}^{\infty}\psi\Big[  \intQ{ E_H^{\delta,\overline{\vt}}(\varrho,\vartheta)  (\tau, \cdot) }\Big]^{n-1}\int_{Q}\frac{\overline{\vt}}{\vartheta^2}\bigg(e_{M,\delta}(\varrho,\vartheta)+\varrho\frac{\partial e_M}{\partial\varrho}(\varrho,\vartheta)\bigg)\nabla\varrho\cdot\nabla\vartheta\dx\dt \ \\
&+n\int_{0}^{\infty}\psi\Big[  \intQ{ E_H^{\delta,\overline{\vt}}(\varrho,\vartheta)  (\tau, \cdot) }\Big]^{n-1} \int_{Q}\varrho{\vc{F}}_\ep (\varrho,\vartheta,\bfu)\cdot\bfu\,\dd W\dx\\&+ \frac{n}{2}\int_{0}^{\infty}\psi
\Big[  \intQ{ E_H^{\delta,\overline{\vt}}(\varrho,\vartheta)  (\tau, \cdot) }\Big]^{n-1}\bigg(
\intQ{ \sum_{k \geq 1} \varrho | {\bf F}_{k,\ep} (\varrho,\vartheta,{\bf u}) |^2  } \bigg) {\rm d}t\\
&+ n\ep M_\ep\int_{0}^{\infty}\psi \Big[  \intQ{ E_H^{\delta,\overline{\vt}}(\varrho,\vartheta)  (\tau, \cdot) }\Big]^{n-1} \intO{ \left( 2\delta \vr + \frac{\delta \Gamma}{\Gamma - 1} \vr^{\Gamma - 1}+\frac{1}{2}|\bfu|^2\right) }  \dt\\
&+ n\overline\vt\ep M_\ep\int_{0}^{\infty}\psi \Big[  \intQ{ E_H^{\delta,\overline{\vt}}(\varrho,\vartheta)  (\tau, \cdot) }\Big]^{n-1} \intO{ \left( \frac{p_M(\varrho, \vartheta)}{\varrho\vt}+\frac{e_{M,\delta}(\varrho, \vartheta)}{\vt}
-
s_{M,\delta}(\varrho, \vartheta) \right) }  \dt\\
&+\frac{n(n-1)}{2}\int_{0}^{\infty}\psi \Big[  \intQ{ E_H^{\delta,\overline{\vt}}(\varrho,\vartheta)  (\tau, \cdot) }\Big]^{n-2}\sum_{k=1}^\infty\int_{\tau_1}^{\tau_2}\bigg( \int_{Q}\varrho\vc F_{k,\ep} (\varrho,\vartheta,\bfu)\cdot\bfu\dx\bigg)^2\dt.
\end{aligned}
\end{align}
Here we abbreviated
\begin{align*}
\sigma_{\varepsilon,\delta}&=\frac{1}{\vartheta}\Big[\mathbb S(\vartheta,\nabla\bfu):\nabla\bfu+\frac{\kappa(\vartheta)}{\vartheta}|\nabla\vartheta|^2+\frac{\delta}{2}\Big(\vt^{\Gamma-1}+\frac{1}{\vartheta^2}\Big)|\nabla\vartheta|^2+\delta\frac{1}{\vartheta^2}\Big]\\
&+\frac{\varepsilon\delta}{2\vartheta} {\left( \beta\varrho^{\Gamma-2} + 2 \right)} |\nabla\varrho|^2+\varepsilon\frac{\partial p_M}{\partial\varrho}(\varrho,\vartheta)\frac{|\nabla\varrho|^2}{\varrho\vartheta} + \varepsilon\frac{\vr}{\vt} |\Grad \vu|^2 ,
\end{align*}
and $E_{H}^{\delta, \overline{\vt}}=\frac{1}{2}\vr|\bfu|^2+H_{\overline{\vt}}(\vr, \vt)+\delta\big(\vr^2+\frac{1}{\Gamma-1}\vr^\Gamma\big)$,
where
\begin{align}\label{eq:Hdelta}
H_{\delta, \overline{\vt}}(\vr, \vt) = \vr \left( e_\delta(\vr, \vt) - \overline{\vt} s_\delta (\vr, \vt) \right)=H_{\overline\vt}(\vr, \vt)+\delta\vr\vt-\overline\vt \vr\log(\vt)
\end{align}
with $H_{\overline{\vt}}(\vr, \vt)$ introduced in \eqref{H}.
\end{Proposition}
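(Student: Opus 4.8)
The statement has two parts: the existence of a weak martingale solution $[\vr_\ep,\vt_\ep,\bfu_\ep]$ to \eqref{L1}--\eqref{L4}, and the generalized energy inequality \eqref{b2a}. For the existence part the plan is to reproduce, with $\ep,\delta>0$ kept fixed, the multi-layer approximation scheme of \cite{BF} (which rests on \cite{BrHo} and \cite{F}): a Faedo--Galerkin truncation of the momentum balance \eqref{L2}, the parabolic regularization already built into \eqref{L1}, the artificial pressure $\delta(\vr^2+\vr^\Gamma)$, and — as in \cite[Section 3]{BFHM} — the auxiliary terms $-2\ep\vr$ and $-2\ep M_\ep$ in \eqref{L1} which yield a time-independent bound on the mass. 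The only genuinely new ingredient is the non-homogeneous Neumann condition $\bfq\cdot\vc n=d(\vt)(\vt-\Theta_0)$; as in the deterministic analysis \cite{FMNP} it enters only through the boundary integrals over $\partial Q$ in \eqref{L3} and \eqref{L4} and does not disturb the functional framework. On the Galerkin level the system is a finite-dimensional It\^o SDE coupled with nondegenerate parabolic problems for $\vr$ (hence $\vr>0$ and smooth) and $\vt$ (hence $\vt>0$ and smooth); its solvability and the uniform bounds issuing from the energy balance follow as in \cite{BF}, and a stochastic compactness argument (Jakubowski--Skorokhod) together with the standard identification of the stochastic integral provides the solution of \eqref{L1}--\eqref{L4}.

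The core is \eqref{b2a}. Here the plan is to argue first on the Galerkin level, where \eqref{L3} and \eqref{L4} hold with equality and with enough regularity to make all manipulations classical. Testing both balances with the spatially constant test function and forming the $\mathcal E_\delta$-balance minus $\overline\vt$ times the $\vr s_\delta$-balance — accounting for the elementary lower-order terms $\delta\vr\vt$ and $\overline\vt\vr\log\vt$ by which $H_{\overline\vt}$ and $H_{\delta,\overline\vt}=\vr(e_\delta-\overline\vt s_\delta)$ differ, cf. \eqref{eq:Hdelta}, and combining the boundary terms $\int_{\partial Q}d(\vt-\Theta_0)\dH$ and $-\overline\vt\int_{\partial Q}\frac{d(\vt)}{\vt}(\vt-\Theta_0)\dH$ into $-\int_{\partial Q}\frac{d(\vt)}{\vt}(\vt-\overline\vt)(\vt-\Theta_0)\dH$ — one identifies the real-valued process $X_t:=\intQ{ E_H^{\delta,\overline{\vt}}(\vr,\vt)(t,\cdot) }$ as a continuous semimartingale
\[
\dd X_t=D_t\,\dt+\sum_{k\ge1}\Big(\intQ{ \vr\vc F_{k,\ep}(\vr,\vt,\bfu)\cdot\bfu }\Big)\dd\beta_k ,
\]
whose drift $D_t$ collects the negative entropy production $-\overline\vt\intQ{\sigma_{\ep,\delta}}$, the above boundary term, $-\ep\intQ{\vt^5}$, the It\^o correction $\tfrac12\sum_k\intQ{\vr|\vc F_{k,\ep}|^2}$ stemming from the kinetic part $\tfrac12\vr|\bfu|^2$, and all remaining $\ep$- and $\delta$-contributions carried by \eqref{L3}--\eqref{L4} (those from the artificial diffusion $\ep\Grad\vr$, from $M_\ep$, and from $\delta/\vt^2$). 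Since by \eqref{3.28}, \eqref{est:H} and the mass bound $X_t$ is bounded below by a constant depending only on the data, one may — after adding this constant, which affects nothing in the sequel — assume $X_t\ge0$ and apply It\^o's formula to $t\mapsto X_t^n$; the drift produces $nX_t^{n-1}D_t$ and the quadratic variation produces $\tfrac{n(n-1)}{2}X_t^{n-2}\sum_k(\intQ{\vr\vc F_{k,\ep}\cdot\bfu})^2$. Multiplying by $\psi\in\DC((0,\infty))$, $\psi\ge0$, and integrating by parts in time gives \eqref{b2a} with equality on the Galerkin level.

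Finally one passes to the limit in the inner (Galerkin and the remaining auxiliary) approximations. The energy balance, the stochastic integral and the quadratic-variation term converge exactly as in \cite{BF,BFHM}. The entropy-production functional is convex in the gradients $(\Grad\bfu,\Grad\vt,\Grad\vr)$ and hence only weakly lower semicontinuous, so in the limit $\intQ{\sigma_{\ep,\delta}}$ underestimates the true production; since it enters \eqref{b2a} with the nonnegative weight $n\overline\vt\,\psi\,X_t^{n-1}$ (this is where $X_t\ge0$ and $\psi\ge0$ are used), the Galerkin equality degrades precisely into the asserted inequality. The main obstacle is exactly this last step performed simultaneously with the nonlinear It\^o formula: one has to know that $X_t$ is a genuine continuous semimartingale with the displayed quadratic variation (which forces the $n$-th power computation to be carried out on the regular Galerkin level), control the algebraically delicate assembly of $D_t$ into precisely the collection of terms listed in \eqref{b2a}, and verify that each of those terms — in particular the sign-indefinite boundary term and the $\ep\Grad\vr\cdot\Grad\vt$ cross term — converges in the a.s./narrow sense so that the inequality survives the limit.
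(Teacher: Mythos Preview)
Your proposal is correct and follows essentially the same route as the paper: existence is imported from the multi--layer scheme of \cite{BF} (with the extra $-2\ep\vr$, $-2\ep M_\ep$ terms and the non--homogeneous boundary data handled as cosmetic modifications), the $n=1$ balance is obtained on the basic approximate level by combining the total energy balance \eqref{L4} with $\overline\vt$ times the entropy balance \eqref{L3}, the case $n\ge 2$ is then a direct application of It\^o's formula to the real semimartingale $X_t=\intQ{E_H^{\delta,\overline\vt}}$, and the passage to the limit turns the Galerkin equality into the asserted inequality via weak lower semicontinuity of $\sigma_{\ep,\delta}$. The paper's own proof is precisely this sketch, singling out in addition the three ``new'' terms (the two arising from testing \eqref{L1} with $\tfrac12|\bfu|^2$ and $2\delta\vr+\tfrac{\delta\Gamma}{\Gamma-1}\vr^{\Gamma-1}$, and the one from the last line of \eqref{L3}) that do not appear in \cite{BF}.
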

\begin{proof}
Although there are some differences to system (4.24)--(4.27) from \cite{BF} the method
still applies (in particular, it is possible to allow an unbounded time interval by working with spaces of the from $L^q_{\rm loc}([0,\infty);X)$ and $C_{\rm loc}([,0,\infty);X)$ for Banach spaces $X$) and we obtain the existence of a weak martingale solution to \eqref{L1}--\eqref{L4}.
We remark, in particular, that the solution in \cite{BF} is constructed with respect to some initial law which does not play any role in our analysis. For simplicity we choose
\begin{align*}
\vr_0=1,\quad\vt_0=1,\quad\bfu_0=0,
\end{align*}
which satisfies all the required assumptions.

 As far as the energy inequality is concerned, the required version can be derived on the basic approximate level (even with equality) and it is preserved in the limit. 
 In fact, one can argue as in \cite[Section 4.1]{BF} to derive the version for $n=1$, while the case $n\geq2$ follows easily from It\^{o}'s formula. It is worth to point out that this procedure
 to test the continuity equation \eqref{L1} with 
 $\frac{1}{2}|\bfu|^2$ and $2\delta\vr+\frac{\delta\Gamma}{\Gamma-1}\vr^\Gamma$ gives rise
 to the terms
$$ 2\ep n\overline\vt\E\int_{0}^{\infty}\psi\Big[  \intQ{ E_H^{\delta,\overline{\vt}}(\varrho,\vartheta)  (\tau, \cdot) }\Big]^{n-1}\intO{ \left[  \delta\vr^2 + \frac{\delta \Gamma }{\Gamma - 1} \vr^{\Gamma} +\frac{1}{2}\vr|\bfu|^2\right] }  \dt$$
 and
 $$n\ep M_\ep\int_{0}^{\infty}\psi \Big[  \intQ{ E_H^{\delta,\overline{\vt}}(\varrho,\vartheta)  (\tau, \cdot) }\Big]^{n-1} \intO{ \left( 2\delta \vr + \frac{\delta \Gamma}{\Gamma - 1} \vr^{\Gamma - 1}+\frac{1}{2}|\bfu|^2\right) }  \dt$$
in \eqref{b2a} which are new in comparison to 
 \cite{BF}. Also, the term
$$n\overline\vt\ep M_\ep\int_{0}^{\infty}\psi \Big[  \intQ{ E_H^{\delta,\overline{\vt}}(\varrho,\vartheta)  (\tau, \cdot) }\Big]^{n-1} \intO{ \left( \frac{p_M(\varrho, \vartheta)}{\varrho\vt}+\frac{e_{M,\delta}(\varrho, \vartheta)}{\vt}
-
s_{M,\delta}(\varrho, \vartheta) \right) }  \dt,$$
which arises due to the last line in \eqref{L3}, does not appear in \cite{BF}. Finally, as in \eqref{eq0202bisE} we have the boundary term due to non-homogeneous boundary conditions
being incorporated already in \eqref{L3}.
\end{proof}

\subsection{Uniform-in-time estimates}
The first step is now to derive estimates which are uniform in time.
\begin{Proposition}\label{prop:en}
Let $(\varrho,\vt,\bfu)$ be a weak martingale solution to \eqref{L1}--\eqref{L4}.
Assume that
\begin{equation} \label{hypo1a}
{\rm ess} \limsup_{t \to 0+} \E\Big[  \intQ{ E_H^{\delta,\overline{\vt}}(\varrho,\vartheta)  (t, \cdot) }\Big]^n<\infty
\end{equation}
for some
$n\in\N$. Then for any $\overline\vt>0$ and $\ep\leq \ep_0$ there is $E_\infty=E_\infty(n,\ep,\delta,\vt)$ such that
\begin{align}\label{eq:2711}
\E\Big[  \intQ{ E_H^{\delta,\overline{\vt}}(\varrho,\vartheta)  (\tau, \cdot) }\Big]^n&\leq E_\infty,\end{align}
as well as
\begin{align}\label{eq:2711a}
\E\int_{0}^{\tau}\Big[  \intQ{ E_H^{\delta,\overline{\vt}}(\varrho,\vartheta)  (\tau, \cdot) }\Big]^{n-1}\bigg(\int_{\mt}\sigma_{\varepsilon,\delta}\dx+\int_Q\ep \vartheta^5\dx\bigg) \dt
&\leq\,E_\infty(1+\tau)
\end{align}
for any $\tau > 0$.
\end{Proposition}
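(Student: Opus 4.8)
The plan is to turn the generalised energy inequality \eqref{b2a} into a Gronwall estimate for $g(\tau):=\E\big[Y(\tau)^n\big]$, where $Y(\tau):=\intQ{E_H^{\delta,\overline\vt}(\vr,\vt)(\tau,\cdot)}$. From \eqref{est:H} together with \eqref{3.28}, \eqref{3.30} and \eqref{3.39} one first records the two-sided bounds relating $Y$ to $\intQ{\vr|\bfu|^2}$, $\intQ{\vr^{5/3}}$, $\intQ{\vt^4}$ and $\delta\intQ{(\vr^2+\vr^\Gamma)}$; in particular $\intQ{\vr}+\intQ{\vr|\bfu|^2}\aleq Y+c$, and $Y\aleq\intQ{\big(\vr|\bfu|^2+\vr^{5/3}+\vt^4+\delta(\vr^2+\vr^\Gamma)+1\big)}$. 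Moreover, the stabilising terms in \eqref{L1} are designed (as in \cite{BFHM}) so that $t\mapsto\intQ{\vr(t)}$ obeys a linear ODE with a damping term; hence $\intQ{\vr(t)}\le\mathcal M$ $\mathbb P$-a.s.\ for all $t\ge0$, with $\mathcal M=\mathcal M(M_0,Q)$ independent of $t$ and $\ep$. I would localise by the stopping times $\tau_R:=\inf\{t:Y(t)\ge R\}$, take expectations in \eqref{b2a} --- the stochastic integral then has vanishing mean --- and choose $\psi$ approximating $\mathbf 1_{(\tau_1,\tau_2)}$ to pass to the a.e.-in-$\tau$ integrated form; the lower endpoint $\tau_1\to0+$ is handled via \eqref{hypo1a}, and $R\to\infty$ with Fatou removes the localisation.

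The core of the argument is the resulting differential inequality. On the coercive side $\sigma_{\ep,\delta}\ge0$ and $\ep\vt^5\ge0$, and together with the term $2\ep n\overline\vt[Y]^{n-1}\intQ{\big(\delta\vr^2+\tfrac{\delta\Gamma}{\Gamma-1}\vr^\Gamma+\tfrac12\vr|\bfu|^2\big)}$ they dominate --- using Young's inequality, $\intQ{\vt^4}\le\eta\intQ{\vt^5}+C_\eta$ and $\intQ{\vr^{5/3}}\le\intQ{\vr^\Gamma}+C$ --- a quantity bounded below by $[Y]^{n-1}\big(c(\ep,\delta,\overline\vt)\,Y-C\big)$. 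The boundary contribution $n[Y]^{n-1}\int_{\partial Q}\tfrac{d(\vt)}{\vt}(\vt-\overline\vt)(\vt-\Theta_0)\dH$ is bounded below by $-C[Y]^{n-1}$ using \eqref{beq:2}. The remaining $\ep$- and $\ep M_\ep$-terms of \eqref{b2a} --- notably the density--temperature cross term $n\ep[Y]^{n-1}\intQ{\tfrac{\overline\vt}{\vt^2}\big(e_{M,\delta}+\vr\tfrac{\partial e_M}{\partial\vr}\big)\nabla\vr\cdot\nabla\vt}$ and the terms carrying $\intQ{\big(2\delta\vr+\tfrac{\delta\Gamma}{\Gamma-1}\vr^{\Gamma-1}+\tfrac12|\bfu|^2\big)}$ --- are absorbed, for $\ep\le\ep_0$, into $n\overline\vt[Y]^{n-1}\intQ{\sigma_{\ep,\delta}}$ (here one uses $\intQ{|\bfu|^2}\aleq\intQ{|\nabla\bfu|^2}\aleq\intQ{\sigma_{\ep,\delta}}$, which relies on Poincaré, Korn and $\mu(\vt)\ge\underline\mu$) and into the $\delta$-pressure term, at the price of additive contributions of order $[Y]^{n-1}$. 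Finally --- and here the uniform mass bound is decisive --- the noise drift $\tfrac n2[Y]^{n-1}\intQ{\sum_k\vr|\vc F_{k,\ep}|^2}$ and the It\^o correction $\tfrac{n(n-1)}2[Y]^{n-2}\sum_k\big(\intQ{\vr\vc F_{k,\ep}\cdot\bfu}\big)^2$ are, by \eqref{P-1} together with $\intQ{\vr|\bfu|^2}\aleq Y+c$ and $\intQ{\vr}\le\mathcal M$, both $O\big((\sum_k f_k^2)\mathcal M\,[Y]^{n-1}\big)$, i.e.\ subcritical. Collecting all of this and absorbing the $[Y]^{n-1}$-terms into $[Y]^n$ by Young, one obtains $\tfrac{\dd}{\dd\tau}g+\tfrac12c(\ep,\delta,\overline\vt)\,g\le C$ in the integrated sense.

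Integrating and invoking \eqref{hypo1a} then yields $g(\tau)\le\max\big\{{\rm ess}\limsup_{t\to0+}g(t),\,2C/c(\ep,\delta,\overline\vt)\big\}=:E_\infty$, which is \eqref{eq:2711}. The bound \eqref{eq:2711a} follows by returning to \eqref{b2a} integrated over $(0,\tau)$: moving the dissipative term $\E\int_0^\tau[Y]^{n-1}\big(\intQ{\sigma_{\ep,\delta}}+\intQ{\ep\vt^5}\big)\dt$ to the left, every other term is --- by the estimates above together with \eqref{eq:2711} --- at most $CE_\infty(1+\tau)$. The main obstacle I anticipate is the bookkeeping in the second step: \eqref{b2a} and $\sigma_{\ep,\delta}$ each carry several terms of indefinite sign and of different scaling in $\ep,\delta,M_\ep$, and one must verify that each is genuinely controlled, after Young's inequality and for $\ep\le\ep_0$, by the coercive quadruple $\big(\sigma_{\ep,\delta},\,\ep\vt^5,\,\text{the boundary term},\,\delta\text{-pressure}\big)$ --- the cross term $\nabla\vr\cdot\nabla\vt$ and the mere $L^1$-integrability of $\Theta_0$ in the boundary term being the most delicate points. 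Conceptually, the novelty compared with the energetically closed system is that the uniform-in-time mass bound keeps the stochastic terms subcritical, so that a linear --- rather than superlinear --- dissipation already closes the estimate.
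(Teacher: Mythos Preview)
Your proposal is correct and follows essentially the same route as the paper: both arguments start from the generalised energy inequality \eqref{b2a}, extract from the $\ep$--stabilising term $2\ep n\overline\vt\,[Y]^{n-1}\intQ{\big(\delta\vr^2+\tfrac{\delta\Gamma}{\Gamma-1}\vr^\Gamma+\tfrac12\vr|\bfu|^2\big)}$ a coercive contribution of order $[Y]^n$, absorb the cross term $\nabla\vr\cdot\nabla\vt$ and the remaining $\ep$--, $\ep M_\ep$-- and noise--terms into $\sigma_{\ep,\delta}$, $\ep\vt^5$ and the $\delta$--pressure via Young's inequality (the paper invokes the explicit bound \eqref{m140} for the cross term), and close by a Gronwall argument together with \eqref{hypo1a}; the second estimate \eqref{eq:2711a} is then read off directly. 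Your treatment is in places more explicit than the paper's --- you spell out the stopping--time localisation and the Poincar\'e--Korn control of $\intQ{|\bfu|^2}$ needed for term (VI) in the paper's labelling --- and the delicacy you flag concerning $\Theta_0\in L^1(\partial Q)$ is genuine, but for Proposition~\ref{prop:en} the boundary term only needs to be bounded below by $-C[Y]^{n-1}$, which is exactly what the paper also uses at this stage.
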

\begin{proof}
The energy inequality in \eqref{b2a} yields for any $0\leq \tau_1<\tau_2$
 \begin{align} \label{wWS24'}
\begin{aligned}
&
 \Big[  \intQ{ E_H^{\delta,\overline{\vt}}(\varrho,\vartheta)  (\tau_2, \cdot) }\Big]^n
+n\overline{\vt} \int_{\tau_1}^{\tau_2}\Big[  \intQ{ E_H^{\delta,\overline{\vt}}(\varrho,\vartheta)  (\tau, \cdot) }\Big]^{n-1}\int_{\mt}\sigma_{\varepsilon,\delta}\dx\\
&+ n\int_{\tau_1}^{\tau_2}\Big[  \intQ{ E_H^{\delta,\overline{\vt}}(\varrho,\vartheta)  (\tau, \cdot) }\Big]^{n-1}\bigg(\int_{Q}\ep \vartheta^5\dx+\int_{\partial Q}\frac{d(\vt)}{\vt}\Big(\vt-\overline{\vt}\Big)(\vt-\Theta_0)\dH\bigg)  \dt\\
&+ 2\ep n\overline\vt\E\int_{\tau_1}^{\tau_2}\Big[  \intQ{ E_H^{\delta,\overline{\vt}}(\varrho,\vartheta)  (\tau, \cdot) }\Big]^{n-1}\intO{ \left[  \delta\vr^2 + \frac{\delta \Gamma }{\Gamma - 1} \vr^{\Gamma} +\frac{1}{2}\vr|\bfu|^2\right] }  \dt\\
&+2\varepsilon n \int_{\tau_1}^{\tau_2} \Big[  \intQ{ E_H^{\delta,\overline{\vt}}(\varrho,\vartheta)  (\tau, \cdot) }\Big]^{n-1}\int_{Q}\varrho\Big( \frac{p_M(\varrho, \vartheta)}{\varrho\vt}
+\frac{ e_{M,\delta}(\varrho, \vartheta)}{\vt} -s_{M,\delta}(\varrho, \vartheta) 
 \Big)\dxt\\
&= \,\Big[\intQ{  E_H^{\delta,\overline{\vt}}(\varrho,\vartheta)(\tau_1,\cdot) }\Big]^n\\&+n\varepsilon\int_{\tau_1}^{\tau_2}\Big[  \intQ{ E_H^{\delta,\overline{\vt}}(\varrho,\vartheta)  (\tau, \cdot) }\Big]^{n-1}\int_{Q}\frac{\overline{\vt}}{\vartheta^2}\bigg(e_{M,\delta}(\varrho,\vartheta)+\varrho\frac{\partial e_M}{\partial\varrho}(\varrho,\vartheta)\bigg)\nabla\varrho\cdot\nabla\vartheta\dx\dt\\
&+n\int_{\tau_1}^{\tau_2}\Big[  \intQ{ E_H^{\delta,\overline{\vt}}(\varrho,\vartheta)  (\tau, \cdot) }\Big]^{n-1}\int_{Q}\bigg(\frac{\delta}{\vartheta^2}+\varepsilon\overline{\vt} \vartheta^4\bigg)\dx\dt\\
&+n\int_{\tau_1}^{\tau_2}\Big[  \intQ{ E_H^{\delta,\overline{\vt}}(\varrho,\vartheta)  (\tau, \cdot) }\Big]^{n-1} \int_{Q}\varrho{\mathbb F}_{\ep} (\varrho,\vartheta,\bfu)\cdot\bfu\,\dd W\dx\\&+ \frac{n}{2}\int_{\tau_1}^{\tau_2}
\Big[  \intQ{ E_H^{\delta,\overline{\vt}}(\varrho,\vartheta)  (\tau, \cdot) }\Big]^{n-1}\bigg(
\intQ{ \sum_{k \geq 1} \varrho | {\bf F}_{k,\ep} (\varrho,\vartheta,{\bf u}) |^2  } \bigg) {\rm d}t\\
&+ \ep M_\ep n\int_{\tau_1}^{\tau_2} \Big[  \intQ{ E_H^{\delta,\overline{\vt}}(\varrho,\vartheta)  (\tau, \cdot) }\Big]^{n-1} \intO{ \left( 2\delta \vr + \frac{\delta \Gamma}{\Gamma - 1} \vr^{\Gamma - 1}+\frac{1}{2}|\bfu|^2\right) }  \dt\\
&+ \ep M_\ep n\overline\vt\int_{\tau_1}^{\tau_2} \Big[  \intQ{ E_H^{\delta,\overline{\vt}}(\varrho,\vartheta)  (\tau, \cdot) }\Big]^{n-1} \intO{ \left( \frac{p_M(\varrho, \vartheta)}{\varrho\vt}+\frac{e_{M,\delta}(\varrho, \vartheta)}{\vt}
-
s_{M,\delta}(\varrho, \vartheta) \right) }  \dt\\
&+\frac{n(n-1)}{2}\int_{\tau_1}^{\tau_2} \Big[  \intQ{ E_H^{\delta,\overline{\vt}}(\varrho,\vartheta)  (\tau, \cdot) }\Big]^{n-2}\sum_{k=1}^\infty\bigg( \int_{Q}\varrho{\vc F_{k,\ep}} (\varrho,\vartheta,\bfu)\cdot\bfu\dx\bigg)^2\dt\\
&=:(I)+(II)+\dots+ (VIII).
\end{aligned}
\end{align}
Let us first consider the terms on the left-hand side. We have
\begin{align*}
n\int_{\tau_1}^{\tau_2}&\Big[  \intQ{ E_H^{\delta,\overline{\vt}}(\varrho,\vartheta)  (\tau, \cdot) }\Big]^{n-1}\int_{\partial Q}\frac{d(\vt)}{\vt}\Big(\vt-\overline{\vt}\Big)(\vt-\Theta_0)\dH\dt\\&\geq
n\int_{\tau_1}^{\tau_2}\Big[  \intQ{ E_H^{\delta,\overline{\vt}}(\varrho,\vartheta)  (\tau, \cdot) }\Big]^{n-1}\int_{\partial Q}\vt^2\dH\dt-cn\int_{\tau_1}^{\tau_2}\Big[  \intQ{ E_H^{\delta,\overline{\vt}}(\varrho,\vartheta)  (\tau, \cdot) }\Big]^{n-1}\\
&\geq n\int_{\tau_1}^{\tau_2}\Big[  \intQ{ E_H^{\delta,\overline{\vt}}(\varrho,\vartheta)  (\tau, \cdot) }\Big]^{n-1}\int_{\partial Q}\vt^2\dH\dt\\&-\kappa n\int_{\tau_1}^{\tau_2}\Big[  \intQ{ E_H^{\delta,\overline{\vt}}(\varrho,\vartheta)  (\tau, \cdot) }\Big]^{n}\dt-c_\kappa(\tau_2-\tau_1)
\end{align*}
for all $\kappa>0$ as well as
\begin{align*}
\int_{\tau_1}^{\tau_2}&\Big[  \intQ{ E_H^{\delta,\overline{\vt}}(\varrho,\vartheta)  (\tau, \cdot) }\Big]^{n-1}\intO{ \left[  \delta\vr^2 + \frac{\delta \Gamma }{\Gamma - 1} \vr^{\Gamma} +\frac{1}{2}\vr|\bfu|^2\right] }  \dt\\
&\geq \underline c\int_{\tau_1}^{\tau_2}\Big[  \intQ{ E_H^{\delta,\overline{\vt}}(\varrho,\vartheta)  (\tau, \cdot) }\Big]^{n}\dt-c(\tau_2-\tau_1)\\&-c\int_{\tau_1}^{\tau_2}\Big[  \intQ{ E_H^{\delta,\overline{\vt}}(\varrho,\vartheta)  (\tau, \cdot) }\Big]^{n-1}\int_Q \big(\vt^4+\overline\vt\vr\log(\vt)\big) \dt\\
&\geq \underline c\int_{\tau_1}^{\tau_2}\Big[  \intQ{ E_H^{\delta,\overline{\vt}}(\varrho,\vartheta)  (\tau, \cdot) }\Big]^{n}\dt-c_\kappa(\tau_2-\tau_1)\\&-\kappa\int_{\tau_1}^{\tau_2}\Big[  \intQ{ E_H^{\delta,\overline{\vt}}(\varrho,\vartheta)  (\tau, \cdot) }\Big]^{n-1}\int_Q \Big(\ep\vt^5+\delta\vr^2+\delta\frac{1}{\vt^3}\Big) \dt
\end{align*}
due to \eqref{est:H}.
Finally, due to \eqref{3.32}--\eqref{3.39},
\begin{align*}
\frac{p_M(\varrho, \vartheta)}{\vt}
+\frac{\vr e_{M,\delta}(\varrho, \vartheta)}{\vt} -\vr s_{\delta,M}(\varrho, \vartheta) 
\end{align*}
is bounded from below by a negative constant, such that the corresponding term can be bounded from below by $-c(\tau_2-\tau_1)$.\\
Using \eqref{P-1}, $\int_Q\varrho\dx=M_\ep\leq M_0$ and \eqref{est:H} the terms $(V)$ and $(VIII)$ can be bounded by
\begin{align*}
\int_{\tau_1}^{\tau_2} &\Big[  \intQ{ E_H^{\delta,\overline{\vt}}(\varrho,\vartheta)  (\tau, \cdot) }\Big]^{n-1}+\int_{\tau_1}^{\tau_2}\Big[  \intQ{ E_H^{\delta,\overline{\vt}}(\varrho,\vartheta)  (\tau, \cdot) }\Big]^{n-2}\int_Q \vr|\bfu|^2\dx \dt\\
&\leq\,c\int_{\tau_1}^{\tau_2} \Big[  \intQ{ E_H^{\delta,\overline{\vt}}(\varrho,\vartheta)  (\tau, \cdot) }\Big]^{n-1}+c(\tau_2-\tau_1)\\&+c\int_{\tau_1}^{\tau_2}\Big[  \intQ{ E_H^{\delta,\overline{\vt}}(\varrho,\vartheta)  (\tau, \cdot) }\Big]^{n-1}\int_Q \overline\vt\vr\log(\vt) \dt\\
&\leq\kappa \int_{\tau_1}^{\tau_2} \Big[  \intQ{ E_H^{\delta,\overline{\vt}}(\varrho,\vartheta)  (\tau, \cdot) }\Big]^{n}+c_\kappa(\tau_2-\tau_1)\\
&+\kappa\int_{\tau_1}^{\tau_2}\Big[  \intQ{ E_H^{\delta,\overline{\vt}}(\varrho,\vartheta)  (\tau, \cdot) }\Big]^{n-1}\int_Q \Big(\ep\vt^5+\delta\vr^2+\delta\frac{1}{\vt^3}\Big) \dt,
\end{align*}
where $\kappa>0$ is arbitrary. Clearly, $(IV)$ vanishes after taking expectations. On account of
\eqref{3.32}--\eqref{3.39} we have
\begin{align}\label{eq:pesM}
\frac{p_M(\varrho, \vartheta)}{\varrho\vt}+\frac{e_{M,\delta}(\varrho, \vartheta)}{\vt}
-
s_{M,\delta}(\varrho, \vartheta) \lesssim 1+\frac{\vr^{2/3}}{\vt}\leq \kappa \vr^\Gamma+ \kappa\frac{1}{\vt^3}+c_\kappa.
\end{align}
Consequently, the estimate for $(VII)$ is analogous to one for $(V)$ and $(VIII)$ above.
We quote from \cite[equ. (3.107)]{F}
\begin{align}\label{m140}
\begin{aligned}(II)\leq\ep  \int_{Q}& \frac{1}{\vartheta^2}
\Big| e_M(\varrho, \vartheta) + \varrho \frac{\partial e_M (\varrho,
\vartheta)}{\partial \varrho } \Big| |\Grad \varrho| |\Grad
\vartheta| \dx \\
&\leq \frac{1}{2}  \intQ{ \Big[ \delta \Big( \vartheta^{\Gamma - 2} +
\frac{1}{\vartheta^3} \Big) |\Grad \vartheta |^2 + \frac{\ep
\delta}{\vartheta} \Big( \Gamma \varrho^{\Gamma - 2} + 2 \Big)
|\Grad \varrho |^2 \Big] }
\end{aligned}
\end{align}
provided we choose $\ep = \ep(\delta) > 0$ small enough. Finally, we clearly
have
\begin{align*}
(III)&\leq \kappa\int_{\tau_1}^{\tau_2}\Big[  \intQ{ E_H^{\delta,\overline{\vt}}(\varrho,\vartheta)  (\tau, \cdot) }\Big]^{n-1}\int_Q \Big(\ep\vt^5+\delta\frac{1}{\vt^3}\Big) \dt\\&+c_\kappa\int_{\tau_1}^{\tau_2}\Big[  \intQ{ E_H^{\delta,\overline{\vt}}(\varrho,\vartheta)  (\tau, \cdot) }\Big]^{n-1}\dt\\
&\leq \kappa\int_{\tau_1}^{\tau_2}\Big[  \intQ{ E_H^{\delta,\overline{\vt}}(\varrho,\vartheta)  (\tau, \cdot) }\Big]^{n-1}\int_Q \Big(\ep\vt^5+\delta\frac{1}{\vt^3}\Big)\dx \dt\\&+\kappa\int_{\tau_1}^{\tau_2}\Big[  \intQ{ E_H^{\delta,\overline{\vt}}(\varrho,\vartheta)  (\tau, \cdot) }\Big]^{n}\dt+c_\kappa(\tau_2-\tau_1).
\end{align*}
Combing everything and choosing $\kappa$ small enough and noticing that $\delta\frac{1}{\vt^3}\leq\sigma_{\ep,\delta}$ yields
\begin{align}\label{b14}
\begin{aligned}
&
 \E\Big[  \intQ{ E_H^{\delta,\overline{\vt}}(\varrho,\vartheta)  (\tau_2, \cdot) }\Big]^n
+D \E\int_{\tau_1}^{\tau_2}\Big[  \intQ{ E_H^{\delta,\overline{\vt}}(\varrho,\vartheta)  (\tau, \cdot) }\Big]^{n-1}\int_{\mt}\sigma_{\varepsilon,\delta}\dx\\
&+ D\E\int_{\tau_1}^{\tau_2}\Big[  \intQ{ E_H^{\delta,\overline{\vt}}(\varrho,\vartheta)  (\tau, \cdot) }\Big]^{n-1}\bigg(\int_{Q} \ep \vartheta^5\dx+\int_{\partial Q}\vt^2\dH\bigg) \dt\\
&\leq\, \E\Big[  \intQ{ E_H^{\delta,\overline{\vt}}(\varrho,\vartheta)  (\tau_1, \cdot) }\Big]^n+c(\tau_2-\tau_1)
\end{aligned}
\end{align}
for all $0\leq\tau_1<\tau_2$ with some $D>0$. We obtain in particular
\begin{align*}
&
 \E\Big[  \intQ{ E_H^{\delta,\overline{\vt}}(\varrho,\vartheta)  (\tau_2, \cdot) }\Big]^n
+D \int_{\tau_1}^{\tau_2}\E\Big[  \intQ{ E_H^{\delta,\overline{\vt}}(\varrho,\vartheta)  (\tau, \cdot) }\Big]^{n}\dx\\
&\leq\, \E\Big[  \intQ{ E_H^{\delta,\overline{\vt}}(\varrho,\vartheta)  (\tau_1, \cdot) }\Big]^n+C(\tau_2-\tau_1).
\end{align*}
Applying the Gronwall lemma from \cite[Lemma 3.1]{BFH2020} and recalling hypothesis \eqref{hypo1a}
we obtain
\begin{align*}
\begin{aligned}
\E\Big[  \intQ{ E_H^{\delta,\overline{\vt}}(\varrho,\vartheta)  (\tau, \cdot) }\Big]^n&\leq \exp(-Dt)\bigg(\E\Big[  \intQ{ E_H^{\delta,\overline{\vt}}(\varrho,\vartheta)  (0, \cdot) }\Big]^n-\frac{C}{D}\bigg)+\frac{C}{D}\\&\leq E_\infty
\end{aligned}
\end{align*}
uniformly in $\tau>0$. Using this in \eqref{b14} shows
\begin{align*}
\E\int_{0}^{\tau}\Big[  \intQ{ E_H^{\delta,\overline{\vt}}(\varrho,\vartheta)  (\tau, \cdot) }\Big]^{n-1}\bigg(\int_{\mt}\sigma_{\varepsilon,\delta}\dx+\int_{Q} \ep \vartheta^5\dx+\int_{\partial Q}\vt^2\dH\bigg) \dt
&\leq\,E_\infty(1+\tau)
\end{align*}
by possibly enlarging $E_\infty$.
\end{proof}

\subsection{Stationary solutions}
Based on Proposition \ref{prop:en} the method from \cite{IN} becomes available and we can construct a stationary solution to \eqref{L1}--\eqref{L4} following the ideas from \cite{BFH2020} to which we refer for further details.
Different to Section \ref{subsec:solution} we consider stationary solutions sitting on the space of trajectories 
that are defined on the real line $R$ rather than the interval $[0,\infty)$.
We will call them \emph{entire stationary solutions}. This construction is clearly stronger
and hence we obtain also stationary solutions in the sense of Definitions \ref{D2}.

Clearly, Definition \ref{def:sol} can be easily modified for solutions $\big((\Omega,\mf,(\mf_t)_{t\geq-T},\prst),\varrho,\vartheta,\bfu,W)$
being defined
on $[-T,\infty)$ for some $T>0$. An \emph{entire solution} is than an object
$$\big((\Omega,\mf,(\mf_t)_{t\in R},\prst),\varrho,\vartheta,\bfu,W)$$
which is a solution on $[-T,\infty)$ for any $T>0$. It takes values in the trajectory space
\begin{align*}
\mathcal{T} &= \mathcal{T}_\vr\times \mathcal{T}_\vt\times \mathcal{T}_{\bfu}\times \mathcal{T}_W,\\
\mathcal{T}_\vr&=\left( L^2_{\rm loc}(R; W^{1,2} (Q; R^d) ), w \right)\cap C_{\rm weak, loc}(R; L^\Gamma(Q)),\\ \mathcal{T}_{\vt}&=\left( L^2_{\rm loc}(R; W^{1,2} (Q; R^d) ), w \right)\cap \left( L^\infty_{\rm loc}(R; L^{4} (Q) ), w^* \right)\\
\mathcal{T}_{\bfu}&=\left( L^2_{\rm loc}(R; W^{1,2}_0 (Q; R^d) ), w \right) ,\quad
\mathcal{T}_W= C_{\rm loc,0}(R; \mathfrak{U}_0 ),
\end{align*}
where $C_{{\rm loc}, 0}$ denotes the space of continuous functions vanishing at $0$. We denote by $\mathfrak{P}(\mathcal{T})$ the set of Borel probability measures on $\mathcal{T}$.

We say that an entire solution to \eqref{L1}-\eqref{L4} of the problem \eqref{L1}--\eqref{L4} is \emph{stationary} if its law 
$\mathcal{L}_{\mathcal{T}}[\vr,\vt, \vu,  W] $
is shift invariant in the trajectory space $\mathcal{T} $,
meaning
\[
\mathcal{L}_{\mathcal{T}} \left[ \mathcal{S}_\tau [\vr,\vt, \vu,  W] \right] = \mathcal{L}_{\mathcal{T}}[\vr,\vt, \vu,  W]
\ \mbox{for any}\ \tau \in R,
\]
with the time shift operator
\[
\mathcal{S}_\tau [\vr,\vt, \vu,  W] (t) = 
[\vr (t + \tau),\vt(t+\tau), \vu(t + \tau),  W (t + \tau)-W(\tau)], \ t \in R, \ \tau \in R.
\]

\begin{Proposition} \label{prop:main}
Let the assumptions of Theorem \ref{Tm1} be valid and let $\ep\leq\ep_0$ and $\delta\overline\vt>0$ be given.
 Let 
$$\big((\Omega,\mf,(\mf_t)_{t\geq0},\prst),\varrho,\vt,\bfu,W)$$ be a dissipative martingale solution of the problem 
\eqref{L1}--\eqref{L4} (in the sense of Definition \ref{def:sol} with the obvious modifications) such that 
\begin{equation} \label{hypo1}
{\rm ess} \limsup_{t \to 0 +} \expe{ E_H^{\delta,\overline\vt}(t)^4 } < \infty.
\end{equation}
Then there is a sequence $T_n \to \infty$ and an entire stationary solution 
$$\big((\tilde{\Omega},\tilde{\mf},(\tilde{\mf}_t)_{t\in R},\tilde{\prst}),\tvr,\vt,\tilde{\vu}, 
\tilde{W})$$ 
such that 
\[
\frac{1}{T_n} \int_0^{T_n} \mathcal{L}_{\mathcal{T}}\left[ \mathcal{S}_t \left[ \vr,\vt, {\vu},  {W} \right] \right] \dt \to 
\mathcal{L}_{\mathcal{T}} \left[ \tvr, \tilde{\vu},\tilde{\vt}, \tilde{W} \right] \ \mbox{narrowly as}\ 
n \to \infty.
\]
\end{Proposition}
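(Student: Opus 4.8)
The plan is to run the Krylov--Bogoliubov scheme in the form used for the isentropic system in \cite{BFH2020} (which itself implements the It\^{o}--Nisio-type idea of \cite{IN}), the only genuinely new input being the uniform-in-time estimates of Proposition~\ref{prop:en}. First I would observe that hypothesis \eqref{hypo1} is precisely \eqref{hypo1a} with $n=4$, hence with every $n\leq4$, so that Proposition~\ref{prop:en} applies and delivers a constant $E_\infty$, \emph{independent of the time variable}, such that
\[
\E\Big[\intQ{E_H^{\delta,\overline{\vt}}(\vr,\vt)(\tau,\cdot)}\Big]^{n}\leq E_\infty,\qquad n\leq4,
\]
for all $\tau>0$, together with the companion bound \eqref{eq:2711a}, which controls the time integrals of $\sigma_{\varepsilon,\delta}$, of $\ep\vt^5$ and of $\int_{\partial Q}\vt^2\dH$ with at most linear growth in $\tau$. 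Given the dissipative martingale solution $[\vr,\vt,\vu,W]$, I would then introduce the time-averaged laws
\[
\mu_T:=\frac1T\int_0^T\mathcal{L}_{\mathcal{T}}\big[\mathcal{S}_t[\vr,\vt,\vu,W]\big]\,\dt\in\mathfrak{P}(\mathcal{T}),\qquad T>0,
\]
where, following \cite{BFH2020}, the shifted trajectories $\mathcal{S}_t[\vr,\vt,\vu,W]$ --- a priori defined on $[-t,\infty)$ --- are read off consistently on the windows $[-m,m]$, $m\in\N$, so that letting $t\to\infty$ yields objects defined on all of $R$ and $\mu_T\in\mathfrak{P}(\mathcal{T})$.

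The heart of the proof is the tightness of $\{\mu_T\}_{T>0}$ on $\mathcal{T}$. The density bound above, combined with \eqref{est:H} and \eqref{3.28}, controls $\vr$ in $L^\infty_{\rm loc}(R;L^\Gamma(Q))$, $\vt$ in $L^\infty_{\rm loc}(R;L^4(Q))$ and $\sqrt{\vr}\,\vu$ in $L^\infty_{\rm loc}(R;L^2(Q))$, while the dissipation bound \eqref{eq:2711a} controls, after time-averaging, $\vr$ in $L^2_{\rm loc}(R;W^{1,2}(Q))$, $\vt$ (together with $\log\vt$ and $\vt^{1/2}$) in $L^2_{\rm loc}(R;W^{1,2}(Q))$ and $\vu$ in $L^2_{\rm loc}(R;W^{1,2}_0(Q;R^3))$ --- all with constants independent of $T$, which is precisely where the uniform-in-time character of Proposition~\ref{prop:en} is indispensable. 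The temporal equicontinuity required by the $C_{\rm weak,loc}$-component of $\mathcal{T}_\vr$ (and the analogous control of $\vr\vu$ entering the momentum equation) follows from the regularised equations \eqref{L1}, \eqref{L2} together with the above bounds, the Burkholder--Davis--Gundy inequality taking care of the stochastic integral uniformly with respect to the shift $t$. The $W$-marginal equals the Wiener measure --- a time shift of a Wiener process being again a Wiener process --- hence it is automatically tight on $\mathcal{T}_W=C_{\rm loc,0}(R;\mathfrak{U}_0)$. Prokhorov's theorem on the quasi-Polish space $\mathcal{T}$ then yields a sequence $T_n\to\infty$ with $\mu_{T_n}\to\mu_\infty$ narrowly.

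Next I would verify that $\mu_\infty$ is shift invariant. Using that each $\mathcal{S}_\tau$, $\tau\in R$, is continuous on $\mathcal{T}$ and that $\mathcal{S}_\tau\circ\mathcal{S}_t=\mathcal{S}_{t+\tau}$ (the Wiener increments combining correctly), one obtains for every bounded continuous $F$ on $\mathcal{T}$
\[
\Big|\int_{\mathcal{T}}F\circ\mathcal{S}_\tau\,\dd\mu_{T_n}-\int_{\mathcal{T}}F\,\dd\mu_{T_n}\Big|=\frac1{T_n}\bigg|\Big(\int_{T_n}^{T_n+\tau}-\int_0^\tau\Big)\E F\big(\mathcal{S}_s[\vr,\vt,\vu,W]\big)\,\dd s\bigg|\leq\frac{2|\tau|\,\|F\|_\infty}{T_n},
\]
which tends to $0$ as $n\to\infty$; passing to the limit via the narrow convergence $\mu_{T_n}\to\mu_\infty$ gives $\int_{\mathcal{T}}F\circ\mathcal{S}_\tau\,\dd\mu_\infty=\int_{\mathcal{T}}F\,\dd\mu_\infty$ for all such $F$, i.e.\ $\mathcal{S}_\tau$-invariance of $\mu_\infty$ for every $\tau\in R$. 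Finally I would identify $\mu_\infty$ with the law of an entire weak martingale solution of \eqref{L1}--\eqref{L4}: by the Jakubowski--Skorokhod representation theorem there exist, on some $\StbaseC$, variables $(\tvr^n,\tvt^n,\tvu^n,\tilde W^n)$ with laws $\mu_{T_n}$ converging a.s.\ in $\mathcal{T}$ to $(\tvr,\tvt,\tvu,\tilde W)$ of law $\mu_\infty$; equipping the limit space with the filtration generated by these variables and passing to the limit in the weak formulations \eqref{L1}--\eqref{L4} --- the nonlinear terms converging thanks to the strong $L^2_{\rm loc}$ compactness of $\vr$ induced by the artificial viscosity $\ep\Delta\vr$ and of $\vt$, and the stochastic integral being identified by the standard cross-variation argument, cf.\ \cite{BFHbook,BFHM,BFH2020} --- shows that $(\tvr,\tvt,\tvu,\tilde W)$ is an entire weak martingale solution; it is stationary since its law $\mu_\infty$ is shift invariant, and the asserted narrow convergence is nothing but $\mu_{T_n}\to\mu_\infty$.

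I expect the tightness step to be the main obstacle: one has to upgrade the uniform-in-time \emph{moment} bounds of Proposition~\ref{prop:en} to genuine \emph{tightness} of the time-averaged laws in the weak topology underlying $\mathcal{T}$, which in particular demands temporal-equicontinuity estimates for $\vr$ and $\vr\vu$ that are uniform in the time shift --- this is exactly where the uniform-in-time bounds (as opposed to bounds growing with $T$) are essential and where the Burkholder--Davis--Gundy inequality is needed to control the stochastic forcing uniformly. The remaining ingredients --- the narrow extraction, the verification of shift invariance, the Skorokhod representation, the limit passage in the regularised $(\ep,\delta)$-system and the identification of the noise --- are by now routine adaptations of the arguments in \cite{BF,BFHM,BFH2020}.
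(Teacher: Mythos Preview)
Your proposal is correct and follows essentially the same Krylov--Bogoliubov/It\^{o}--Nisio scheme as the paper: time-averaged laws, tightness from Proposition~\ref{prop:en}, shift invariance of the limit, Jakubowski--Skorokhod representation, and passage to the limit in \eqref{L1}--\eqref{L4} using the strong compactness afforded by the $\ep$-regularisation. Your direct verification of shift invariance via the telescoping estimate is a clean alternative to the paper's appeal to \cite[Lemma~5.2]{BFH2020}.

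The one step you under-specify is why the Skorokhod representatives $(\tvr^n,\tvt^n,\tvu^n,\tilde W^n)$ with law $\mu_{T_n}$ are themselves dissipative martingale solutions of \eqref{L1}--\eqref{L4}; this is not automatic, since $\mu_{T_n}$ is an \emph{average} of shifted laws rather than the law of a single shifted solution. The paper handles this (following \cite[Prop.~5.3]{BFH2020}) by encoding the weak formulations as measurable functionals on the trajectory space, so that the solution property transfers from each $\mathcal{L}[\mathcal{S}_t(\vr,\vt,\vu,W)]$ to the average, and explicitly notes that the entropy-production density $\sigma_{\ep,\delta}$ in \eqref{L3} and \eqref{b2a} is not a continuous functional on $\mathcal{T}$ but is measurable on the support of the relevant laws thanks to \eqref{eq:2711a}. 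Once this intermediate step is in place, your limit passage goes through exactly as written.
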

\begin{proof}
Let $[\varrho, \tmmathbf{u}, W]$ be a dissipative martingale solution on $[0,\infty)$ defined on some stochastic basis $(\Omega,\mathfrak{F},(\mathfrak{F}_{t})_{t\geq0},\prst)$ and satisfying \eqref{hypo1}.
We define the probability measures
\begin{equation}\label{eq:6}
\nu_S \equiv \frac{1}{S} \int_0^S \mathcal{L}_{\mathcal{T}} (S_t [\varrho,\vt, \tmmathbf{u},
   W]) \dt \in \mathfrak{P} (\mathcal{T}) .
   \end{equation}
We tacitly regard functions defined on  time intervals $[-t,\infty)$ as trajectories on $R$ by extending them to $s\leq-t$ by the value at $-t$.
As in \cite[Prop. 5.1]{BFH2020} we can show that
the family of measures $\{\nu_{S};\,S>0\}$
is tight on $\mathcal{T}$. In fact, Proposition \ref{prop:en} yields

$$
\expe{ \sup_{s \in [-T,T]} \mathcal{E}_\delta^m (s+t) } + \expe{ \int_{-T\vee -t}^T  \intQ{ \big(|\nabla\vr|^2+|\nabla\vt|^2+|\Grad \vu|^{2}\big)(s+t)}   
\,\D s }\lesssim \expe{ \mathcal{E}^{m}(0)} +c.
$$
This gives the same bounds on $\vr$ and $\bfu$ as in \cite{BFH2020} and we control additionally
$$
\expe{ \sup_{s \in [-T,T]} \bigg(\int_Q\vt^4\dx\bigg)^m (s+t) } + \expe{ \int_{-T\vee -t}^T  \intQ{ |\nabla\vt|^2(s+t)}}   
$$
which implies tightness of $\frac{1}{S} \int_0^S \mathcal{L}_{\mathcal{T}} (S_t [\vt]) \dt$.
Note also that we have control of $\nabla\vr$ due to $\ep>0$ which is different from \cite{BFH2020}.

Due to \cite[Lemma 5.2]{BFH2020}, if the narrow limit of
\[  \nu_{\tau, S_{n}} \equiv \frac{1}{S_{n}} \int_0^{S_{n}} \mathcal{L} (S_{t + \tau}
   [\varrho, \tmmathbf{u}, W]) \dt  \]
in
$\mathfrak{P} (\mathcal{T})$ as $n \rightarrow \infty$ exists for some $\tau=\tau_0 \in R$ then it exists for all $\tau \in R$ and is independent of the choice
of $\tau$. Applying Jakubowski--Skorokhod's theorem \cite{jakubow}, we infer the existence of a sequence $S_{n}\to\infty$ and $\nu\in\mathfrak{P} (\mathcal{T})$ so that $\nu_{0,
S_n} \rightarrow \nu$ narrowly in $\mathfrak{P} (\mathcal{T})$ as well as $\nu_{\tau,
S_n} \rightarrow \nu$ narrowly for all $\tau \in R$. Accordingly, the limit
measure $\nu$ is shift invariant in the sense that for every $G\in BC(\mathcal{T})$ and every $\tau\in R$ we have
$\nu(G\circ S_{\tau})=\nu(G).$
To conclude the proof of Theorem~\ref{prop:main}, it remains to show that $\nu$ is a law of an entire solution to \eqref{L1}--\eqref{L4}.

First of all, we can argue as in \cite[Prop. 5.3]{BFH2020} to show that for any $S>0$
  $$
\nu_{S}\equiv \frac{1}{S}\int_{0}^{S}\mathcal{L}(S_{t}[\varrho, \tmmathbf{u}, W])\dt\in\mathfrak{P}(\mathcal{T})
  $$ 
   is a  dissipative martingale solution on $(- T, \infty)$, provided $(\varrho,\vartheta, \tmmathbf{u}, W)$ is a dissipative martingale solution on $(-T,\infty)$ defined on some probability space $(\Omega,\mathcal{F},\prst)$. The idea is to use that \eqref{L1}, \eqref{L2} and \eqref{L4}
   can be written as measurable mappings on the paths space (see the proof of \cite[Prop. 5.3]{BFH2020} for how to include the stochastic integral).
  Unfortunately, this is not true for the quantities hidden in $\sigma_{\ep,\delta}$ appearing in\eqref{L4} and \eqref{b14}. However,
   they are measurable on a subset, where the laws $\mathcal{L}(S_{t}[\varrho, \tmmathbf{u}, W])$ are supported. Recall from \eqref{eq:2711a} that
$\sigma_{\ep,\delta}$ belongs a.s. to $L^1$ in space and locally in time for any solution. This is enough to arrive at the same conclusion.

To finish the proof we  argue that the limit $\nu$ is the law of  
an entire solution to \eqref{L1}--\eqref{L4}. 
Now we consider the measures $\nu_{\tau, S_n - \tau}$, $n=1,2, \dots$, and $\tau>0$. According to the previous considerations, $\nu_{\tau,S_{n}-\tau}$ is a dissipative martingale solution to \eqref{L1}--\eqref{L4} on $[-\tau,\infty)$ and the narrow limit as $n\to\infty$ exists and equals to $\nu$.
Now we take a sequence $\tau_m \rightarrow \infty$ and choose a diagonal sequence such that
\[ \nu_{\tau_m, S_{n (m)} - \tau_m} \rightarrow \nu \quad \tmop{as} \quad m \rightarrow
   \infty . \]

   Applying  Jakubowski--Skorokhod's theorem, we obtain a sequence of approximate processes $[\tilde{\varrho}_m, \tilde{\tmmathbf{u}}_m, \tilde{W}_m]$ converging a.s. to a process $[\tilde{\varrho}, \tilde{\tmmathbf{u}}, \tilde{W}] $ in the topology of $\mathcal{T}$. Moreover, the law of $[\tilde{\varrho}_m, \tilde{\tmmathbf{u}}_m, \tilde{W}_m]$ is $\nu_{\tau_{m},S_{n(m)}-\tau_{m}}$ and necessarily the law of $[\tilde{\varrho}, \tilde{\tmmathbf{u}}, \tilde{W}] $ is $\nu$. 
   By \cite[Thm. 2.9.1]{BFHbook} it follows that equations \eqref{L1}--\eqref{L4} as well as 
   \eqref{b2a} also hold on the new probability space.
   The limit procedure on this level is quite easy due to the artificial viscosity: By
   definition of $\mathcal T_\vr$ the sequence $\tilde\vr_n$ is compact on $L^\Gamma$. Moreover, the strong convergence of $\tilde\vt_n$ can be proved exactly as in the deterministic existence theory
   (see \cite[Sec. 3.5.3]{F}). This is enough to pass to the limit in all nonlinearities in
   \eqref{L1}, \eqref{L2} and \eqref{L4}. The terms in \eqref{L3} and \eqref{b2a}
   which are not compact (those related to the quantity $\sigma_{\ep,\delta}$) are convex functions
   and hence can be dealt with by lower-semicontinuity.
\end{proof}

\section{Asymptotic limit}
\label{sec:lim}
In this section we pass to the limit and the artificial viscosity and the artificial pressure respectively. They crucial point is a uniform-in-time estimate, see \eqref{d5} and \eqref{d5'} below, which preserves stationarity in the limit. It has to be combined with pressure estimates which differ on both levels. The key ingredient for estimates \eqref{d5} and \eqref{d5'} is the non-homogeneous boundary condition for the temperature, cf. \eqref{beq:1}.

\subsection{The vanishing viscosity limit}
In this section we start with a stationary solution $(\vr,\vt,\bfu)$ to \eqref{L1}--\eqref{L4} existence of which 
is guaranteed by Proposition \ref{prop:main}. We prove uniform-in-time estimates and pass subsequently to the limits in $\ep$ and $\delta$.\\
The entropy balance \eqref{L3} yields after taking expectations and using stationarity
\begin{align*} 
\E\int_{Q}&
\frac{1}{\vartheta}\Big[\mathbb S(\vartheta,\nabla\bfu):\nabla\bfu+\frac{\kappa_\delta(\vartheta)}{\vartheta}|\nabla\vartheta|^2+\delta\frac{1}{\vt^2}\Big]
\,\dif x \\
&\leq\E\int_Q\big(-2\varepsilon \varrho +2\ep M_\ep\big)\frac{1}{\vt}\Big(
\frac{p_M(\varrho, \vartheta)}{\varrho}+ e_{M,\delta}(\varrho, \vartheta) - \vartheta
s_{M,\delta}(\varrho, \vartheta)  \Big)\,\varphi\dx\\
&+\E\int_Q\ep \vartheta^4 \dx+\E\int_{\partial Q}\frac{d(\vt)}{\vt}(\vt-\Theta_0)\dH.
\end{align*}
On account of \eqref{eq:pesM} the first two terms can be bounded by
\begin{align*}
c\,\E\Big[  \intQ{ \delta\vr^\Gamma }+1\Big]+\frac{1}{4}\E\int_{Q}\delta\frac{1}{\vt^3}\dx.
\end{align*}
 The estimate is independent of $\delta$ if we choose $\ep\leq \delta$.
Similarly, we obtain
\begin{align*}
\E\int_{\partial Q}\frac{d(\vt)}{\vt}(\vt-\Theta_0)\dH&\leq\,c\, \E\int_{\partial Q}\big(\vt+|\nabla\vt|\big)+c\\
&\leq c\,\E\Big[  \intQ{\vt^4 }+1\Big]+\frac{1}{4}\E\int_{Q}\frac{\kappa_\delta(\vartheta)}{\vartheta}|\nabla\vartheta|^2
\,\dif x
\end{align*}
using \eqref{beq:2} and the trace theorem. In conclusion,
\begin{align} \label{stat:entropy}
\begin{aligned}
\E\int_{Q}
\frac{1}{\vartheta}\Big[\mathbb S(\vartheta,\nabla\bfu):\nabla\bfu+\frac{\kappa_\delta(\vartheta)}{\vartheta}|\nabla\vartheta|^2+\delta\frac{1}{\vt^2}\Big]
\,\dif x&\lesssim \E\Big[  \intQ{ \big(\delta\vr^\Gamma+\vt^4\big)}\Big]+1\\
&\lesssim \E\Big[  \intQ{ E_H^{\delta,\overline{\vt}}(\varrho,\vartheta)}\Big]+1
\end{aligned}
\end{align}
independently of $\ep$ and $\delta$ recalling also \eqref{est:H} and $\int_Q\vr\dx\leq M_\ep\leq M_0$. By \eqref{est:H} this implies for any $\xi>0$
\begin{align*}
\E\Big[  \intQ{ E_H^{\delta,\overline{\vt}}(\varrho,\vartheta)}\Big]&\leq\,c\, \E\Big[  \intQ{ \big(\delta\vr^\Gamma+\vr^{5/3}+\vt^4+\delta\vr|\log(\vt)|\big)}+1\Big]\\
&\leq\,c \E\Big[  \intQ{ \big(\delta\vr^\Gamma+\vr^{5/3}+\vt^4\big)}\Big]+\xi\, \E\Big[  \intQ{ \delta\frac{1}{\vt^3}}\Big]+c_\xi\\
&\leq\,c \E\Big[  \intQ{ \big(\delta\vr^\Gamma+\vr^{5/3}+\vt^4\big)}\Big]+c\xi\,\E\Big[  \intQ{ E_H^{\delta,\overline{\vt}}(\varrho,\vartheta)}\Big]+c_\xi
\end{align*}
such that
\begin{align}\label{eq:2811}
\E\Big[  \intQ{ E_H^{\delta,\overline{\vt}}(\varrho,\vartheta)}\Big]
&\lesssim \E\Big[  \intQ{ \big(\delta\vr^\Gamma+\vr^{5/3}+\vt^4\big)}\Big]+1
\end{align}
independently of $\ep$ and $\delta$.

In \eqref{b2a} we choose $n=2$, apply expectations and use stationarity to obtain
 \begin{align} \label{b2A}
\begin{aligned}
&2\overline{\vt} \E\Big[  \intQ{ E_H^{\delta,\overline{\vt}}(\varrho,\vartheta)  (\tau, \cdot) }\Big]\int_{\mt}\sigma_{\varepsilon,\delta}\dx\\
&+ 2\E\Big[  \intQ{ E_H^{\delta,\overline{\vt}}(\varrho,\vartheta)  (\tau, \cdot) }\Big]\int_{Q} \ep \vartheta^5\dx \\
&+2\E\Big[  \intQ{ E_H^{\delta,\overline{\vt}}(\varrho,\vartheta)  (\tau, \cdot) }\Big]\int_{\partial Q}\frac{d}{\vt}\Big(\vt-\overline{\vt}\Big)(\vt-\Theta_0)\dH\\
&+ 4\ep \overline\vt\E\Big[  \intQ{ E_H^{\delta,\overline{\vt}}(\varrho,\vartheta)  (\tau, \cdot) }\Big]\intO{ \left[  \delta\vr^2 + \frac{\delta \Gamma }{\Gamma - 1} \vr^{\Gamma} +\frac{1}{2}\vr|\bfu|^2\right] }  \\
&+4\varepsilon  \E\Big[  \intQ{ E_H^{\delta,\overline{\vt}}(\varrho,\vartheta)  (\tau, \cdot) }\Big]\int_{Q}\varrho\Big( \frac{p(\varrho, \vartheta)}{\varrho\vt}
+\frac{ e_\delta(\varrho, \vartheta)}{\vt} -s_\delta(\varrho, \vartheta) 
 \Big)\dx\\
&\leq 2\varepsilon\E\Big[  \intQ{ E_H^{\delta,\overline{\vt}}(\varrho,\vartheta)  (\tau, \cdot) }\Big]\int_{Q}\frac{\overline{\vt}}{\vartheta^2}\bigg(e_{M,\delta}(\varrho,\vartheta)+\varrho\frac{\partial e_M}{\partial\varrho}(\varrho,\vartheta)\bigg)\nabla\varrho\cdot\nabla\vartheta\dx\\
&+
\E\Big[  \intQ{ E_H^{\delta,\overline{\vt}}(\varrho,\vartheta)  (\tau, \cdot) }\Big]\bigg(
\intQ{ \sum_{k \geq 1} \varrho | {\bf F}_{k,\ep} (\varrho,\vartheta,{\bf u}) |^2  } \bigg)\\
&+ 2\ep M_\ep\E\Big[  \intQ{ E_H^{\delta,\overline{\vt}}(\varrho,\vartheta)  (\tau, \cdot) }\Big] \intO{ \left( 2\delta \vr + \frac{\delta \Gamma}{\Gamma - 1} \vr^{\Gamma - 1}+\frac{1}{2}|\bfu|^2\right) } \\
&+ 2\ep M_\ep\overline\vt\E\Big[  \intQ{ E_H^{\delta,\overline{\vt}}(\varrho,\vartheta)  (\tau, \cdot) }\Big] \intO{ \left( \frac{p(\varrho, \vartheta)}{\varrho\vt}+\frac{e_\delta(\varrho, \vartheta)}{\vt}
-
s_\delta(\varrho, \vartheta) \right) } \\
&+\sum_{k=1}^\infty\E\bigg( \int_{Q}\varrho\vc F_{k,\ep} (\varrho,\vartheta,\bfu)\cdot\bfu\dx\bigg)^2\\
&=:(I)+(II)+(III)+(IV)+(V).
\end{aligned}
\end{align}
Arguing as in the proof of Proposition \ref{prop:en} but paying attention to the $\ep$- and $\delta$-dependence we have
\begin{align*}
(I)&\leq\frac{1}{4}\E\Big[  \intQ{ E_H^{\delta,\overline{\vt}}(\varrho,\vartheta)  (\tau, \cdot) }\Big]\int_Q\sigma_{\ep,\delta}\dx,\\
(II)&\leq\,c\, \E\Big[  \intQ{ E_H^{\delta,\overline{\vt}}(\varrho,\vartheta)  (\tau, \cdot) }\Big],\\
(III)&\leq\ep \overline\vt\E\Big[  \intQ{ E_H^{\delta,\overline{\vt}}(\varrho,\vartheta)  (\tau, \cdot) }\Big]\intO{ \left[  \delta\vr^2 + \frac{\delta \Gamma }{\Gamma - 1} \vr^{\Gamma}\right] }\\
&+\,c\, \E\Big[  \intQ{ E_H^{\delta,\overline{\vt}}(\varrho,\vartheta)  (\tau, \cdot) }\Big]+\frac{1}{4}\E\Big[  \intQ{ E_H^{\delta,\overline{\vt}}(\varrho,\vartheta)  (\tau, \cdot) }\Big]\int_Q\sigma_{\ep,\delta}\dx,\\
(IV)&\leq\ep \overline\vt\E\Big[  \intQ{ E_H^{\delta,\overline{\vt}}(\varrho,\vartheta)  (\tau, \cdot) }\Big]\intO{ \frac{\delta \Gamma }{\Gamma - 1} \vr^{\Gamma} }+c\\
&+\frac{1}{4}\E\Big[  \intQ{ E_H^{\delta,\overline{\vt}}(\varrho,\vartheta)  (\tau, \cdot) }\Big]\int_Q\sigma_{\ep,\delta}\dx,\\
(V)&\leq\,c\, \E\Big[  \intQ{ E_H^{\delta,\overline{\vt}}(\varrho,\vartheta)  (\tau, \cdot) }\Big].
\end{align*}
Again these estimates are also uniform in $\delta$ if we choose $\ep$ small enough compared to $\delta$.
Recalling \eqref{est:H} and $\int_Q\vr\dx\leq M_\ep\leq M_0$ we thus obtain
\begin{align*}
&\| \vt \|_{L^6(Q)} \aleq \| \vt \|_{W^{1,2}(Q)}
\aleq 1+
\int_Q \frac{\Ov{\vt}}{\vt} \frac{\kappa(\vt)}{\vt} |\Grad \vt|^2 {\rm d}x + \int_{\partial Q}\vt^2  \dH,\\
&\expe{ \int_Q  E_H^{\delta,\overline{\vt}}(\vr,\vt,\bfu) \ \dx \| \vt \|_{L^6(Q)} } \ageq
\expe{ \| \vt \|_{L^4(Q)}^4 \| \vt \|_{L^6(Q)} }-\expe{ \| \vt \|_{L^6(Q)} } \\\qquad\qquad&\geq \expe{ \| \vt \|_{L^{30/7}(Q)}^5 }-\expe{  \| \vt \|_{L^6(Q)} }. 
\end{align*}
This, inserted in left-hand side of \eqref{b2A} yields
\begin{align}\label{eq:key}
\begin{aligned}
 \expe{ \| \vt \|_{L^{30/7}(Q)}^{30/7} }&+\expe{ \int_Q  E_H^{\delta,\overline{\vt}}(\vr,\vt,\bfu) \ \dx \int_Q\sigma_{\ep,\delta}\dx }\\
 &\aleq  \E\Big[  \intQ{ E_H^{\delta,\overline{\vt}}(\varrho,\vartheta) }\Big]+1.
 \end{aligned}
\end{align}
independently of $\ep$ and $\delta$ using also \eqref{stat:entropy}.

In order to close the estimate why apply pressure estimates which now can depend on $\delta$. Let us introduce the so--called Bogovskii operator $\mathcal{B}$ enjoying the following properties: 
\begin{align}\label{def:bog}
\begin{aligned}
\mathcal{B} : L^q_0 (Q) &\equiv \left\{ f \in L^q(Q) \ \Big| \ \int_Q f \dx = 0 \right\}
\to W^{1,q}_0(Q, R^d),\ 1 < q < \infty,\\
\Div \mathcal{B}[f] &= f ,\\ 
\| \mathcal{B}[f] \|_{L^r(Q)} &\aleq \| \vc{g} \|_{L^r(Q; R^d)} \ \mbox{if}\ 
f = \Div \vc{g}, \ \vc{g} \cdot \vc{n}|_{\partial Q} = 0, \ 1 < r < \infty,
\end{aligned}
\end{align}
see \cite[Chapter 3]{Gal2011} or \cite{GH}.
Arguing as in \cite[Section 5]{BFHM} (but replacing $\nabla\Delta^{-1}$ by the Bogovskii operator $\mathcal B$) we obtain
\begin{equation*}
\begin{split}
&\expe{  \intO{ \left[p(\vr,\vt)\vr   +
\frac{1}{3}\vr^{2} |\vu|^2 \right]} }\\
& =c(M_0) \,\expe{  \intO{ \Big( p(\vr,\vt)   +
\frac{1}{3}\vr |\vu|^2  \Big) } } \\
&\quad - \expe{  \intO{ \Big( \vr \vu \otimes \vu - \frac{1}{3} \vr |\vu |^2 \mathbb{I} \Big) : \Grad \mathcal B( \vr-M_\varepsilon)  } } \\
&\quad+
\expe{  \intO{ \Big(\frac{4}{3}\mu(\vt)+\eta(\vt) \Big) \Div \vu \ \vr  }\dt }+ \expe{ \intO{ \vr \vu \cdot \mathcal B  \Div (\vr \vu )  }}\\
&\quad+ 2 \ep \,\expe{ \intO{ \vre \vue \cdot \mathcal B\left[
\vr_\varepsilon -M_\varepsilon \right] } } +\ep\,\expe{ \intO{ \vre^2 \Div\vue } }\\
&=:(I)+(II)+(III)+(IV)+(V)+(VI).
\end{split}
\end{equation*}
The terms (II) and (IV)--(VI) can be estimated as in \cite{BFHM} (note that they don't contain $\vt$). 
In fact, we have by \eqref{est:H} and the continuity of $\nabla \mathcal B$
\begin{align*}
(II)&\lesssim \E\|\sqrt{\vr}\bfu\|_{L^2_x}\|\bfu\|_{L^6_x}\|\sqrt{\vr}\nabla\mathcal B(\vr-M_\ep)\|_{L^3_x}\\
&\lesssim \E\int_{\mt}E_H(\vr,\vt,\bfu)\dx\|\nabla\bfu\|^2_{L^2_x}+\E\|\sqrt{\vr}\nabla \mathcal B(\vr-M_\ep)\|_{L^3_x}^2\\
&\lesssim\,1+ \E\intO{  E_H^{\delta,\overline{\vt}}(\vr,\vt,\bfu) }
\end{align*}
provided $\Gamma$ is large enough.
Furthermore, we obtain for some $\alpha\in(0,1)$
\begin{align*}
(IV)&\lesssim \E\|\vr\bfu\|_{L^2_x}^2\lesssim \E\|\vr\|_{L^3_x}^2\|\bfu\|_{L^6_x}^2\lesssim
\E\|\vr\|_{L^3_x}^2\|\nabla\bfu\|_{L^2_x}^2\\&\lesssim \E\|\vr\|_{L^1_x}^{2\alpha}\|\vr\|^{2(1-\alpha)}_{L^\Gamma}\|\nabla\bfu\|_{L^2_x}^2\lesssim \E\|\vr\|^{2(1-\alpha)}_{L^\Gamma}\|\nabla\bfu\|_{L^2_x}^2\\
&\lesssim \E\int_{\mt}E_H^{\delta,\overline{\vt}}(\vr,\vt,\bfu)\dx\|\nabla\bfu\|^2_{L^2_x}+\E\|\nabla\bfu\|^2_{L^2_x}\\
&\lesssim\,1+ \E\intO{  E_H^{\delta,\overline{\vt}}(\vr,\vt,\bfu) } 
\end{align*}
using again \eqref{est:H}, $\int_Q\varrho\dx\leq M_\ep\leq M_0$, \eqref{eq:key} as well as \eqref{stat:entropy}.
We can estimate $(V)$ and $(VI)$ along the same lines.
Using \eqref{m105} and \eqref{m106} we have for $\Gamma$ large enough
\begin{align*}
(III)\lesssim \expe{ \intO{\big(\vt^4+\varrho^4+|\nabla\bfu|^2\big)}}\lesssim \expe{ 1+\left( \intO{  E_H^{\delta,\overline{\vt}}(\vr,\vt,\bfu) } \right)}
\end{align*}
due to \eqref{est:H} \eqref{stat:entropy}. Obviously, the same estimate holds for
(I) such that we can conclude
\begin{align}\label{pressure:ep}
\expe{  \intO{  \big(\varrho^{\Gamma+1}+\vt^4\vr+\vr^2|\bfu|^2 \big)}}\lesssim \expe{ 1+ \intO{  \big(\varrho^\Gamma+\vt^4+\vr|\bfu|^2\big) } },
\end{align}
using also  \eqref{eq:2811}.
Obviously, we have
\begin{align*}
\intO{  \big(\varrho^\Gamma+\vt^4\big) }&\leq\,\xi\intO{  \big(\varrho^{\Gamma+1}+\vt^{30/7}\big) }+c(\xi),\\
\expe{\intO{ \vr|\bfu|^2 } }&\leq \tilde\xi\expe{\intO{ \vr^2|\bfu|^2 } }+c(\tilde\xi)\expe{\intO{ |\nabla\bfu|^2 } }\\
&\leq \tilde\xi\expe{\intO{ \vr^2|\bfu|^2 } }+c(\tilde\xi)\intO{  \big(\varrho^\Gamma+\vt^4+1\big) }
\end{align*}
for any $\xi,\tilde\xi>0$ using again \eqref{stat:entropy}.
Consequently, all terms can be absorbed in the left-hand side and we end up with
\begin{equation}\label{d5}
\expe{  \intO{ \left[\vr^{\Gamma+1}+\vt^4\vr+\vt^{30/7}   +
\vr^{2} |\vu|^2 \right]} }\leq \,c
\end{equation}
as well as
 \begin{align} \label{d5'}
& \expe{ \left( \intO{  E_H^{\delta,\overline{\vt}}(\vr,\vt,\bfu) } \right)
\int_{\mt}
\sigma_{\ep,\delta} \dx}  \leq\,c
\end{align}
using \eqref{stat:entropy}.\\

Estimates \eqref{d5} and \eqref{d5'} are sufficient to pass to the limit in \eqref{L1}--\eqref{L4} arguing as in \cite[Section 5]{BF} (in fact, one has to combine ideas from \cite{BFHbook} and \cite{F}).
In the limit $\varepsilon\rightarrow0$, we obtain the following system.

\begin{itemize}
\item {\bf Equation of continuity.}
\begin{equation} \label{L1del}
\begin{split}
\int_{0}^{\infty} &\intO{ \left[ \vr \partial_t \varphi + \vr \vu \cdot \Grad \varphi \right] } \dt = 0
\end{split}
\end{equation}
for any $\varphi \in \DC((0, \infty) \times \mt)$ $\prst$-a.s.
\item {\bf Momentum equation.}
\begin{equation} \label{L2del}
\begin{split}
\int_0^\infty \partial_t &\psi\intO{  \vr \vu \cdot \bfvarphi} \dt  + \int_0^\infty\psi  \intO{ \vr \vu \otimes \vu : \Grad \bfvarphi } \dt
+ \int_0^\infty \psi \intO{ p_\delta(\vt,\vr)  \Div \bfvarphi } \dt  \\
&-   \int_0^\infty\psi  \intO{ \mathbb{S}_\delta(\vt,\Grad \vu) : \Grad \bfvarphi} \dt 
=- \int_0^\infty \psi \intO{ \vr\mathbb{F}(\vr,\vt,\bfu) \cdot \bfvarphi } \ \D W
\end{split}
\end{equation}
for any $\psi \in \DC((0, \infty))$, $\bfvarphi \in C^\infty(\mt; \R^3)$ $\prst$-a.s.
\item {\bf Entropy balance.}
\begin{equation} \label{L3del}
\begin{aligned}
-
 \int_0^\infty &\intQ{ \left[\varrho s_\delta(\varrho,\vartheta) \partial_t \psi + \varrho s_\delta (\varrho,\vartheta)\bfu \cdot \nabla\psi \right]\,\varphi}\dif t\\\geq & \int_0^\infty\int_{Q}
\frac{1}{\vartheta}\Big[\mathbb S_\delta(\vartheta,\nabla\bfu):\nabla\bfu+\frac{\kappa_\delta(\vartheta)}{\vartheta}|\nabla\vartheta|^2+\delta\frac{1}{\vt^2}\Big]
\psi\,\dif x\,\dif t \\
&+ \int_0^\infty\intQ{ \frac{\kappa_\delta(\vartheta)\nabla\vartheta}{\vartheta} \cdot \nabla\psi }\,\dif t-\int_0^\infty\psi\int_{\partial Q}\frac{d}{\vt}(\vt-\Theta_0)\dH\dt
\end{aligned}
\end{equation}
for any $\psi \in \DC((0, \infty))$, $\varphi \in C^\infty(\mt; \R^3)$ $\prst$-a.s.
\item {\bf Total energy balance.}
\begin{equation} \label{L4del}
\begin{split}
- \int_0^T& \partial_t \psi
\bigg(\int_{Q} {\mathcal E_\delta(\varrho,\vartheta, \vu) } \dx\bigg) \dt
+\int_0^\infty\psi\int_{\partial Q}d(\vt-\Theta_0)\dH\dt \\
= &\,\psi(0) \intTor{{\mathcal E_\delta(\varrho_0,\vartheta_0, \vu_0 )} }+\int_0^T\int_{Q}\frac{\delta}{\vartheta^2}\psi\dx\dt\\
&+ \frac{1}{2} \int_0^T
\psi \bigg(
\intQ{ \sum_{k \geq 1} \varrho | {\bf F}_{k} (\varrho,\vartheta, {\bf u}) |^2  } \bigg) {\rm d}t\\
&+\int_0^T  \psi\int_{Q}\varrho{\vc{F}}(\varrho,\vartheta,\bfu)\cdot\bfu\,\dd W\dx
\end{split}
\end{equation}
for any $\psi \in \DC(0, \infty)$ $\mathbb{P}$-a.s.
\end{itemize}

To summarize, we deduce the following.

\begin{Proposition}\label{prop:1608bdel}
Let $\delta>0$ be given. Then there exists a stationary weak martingale solution $[\varrho_\delta,\vt_\delta,\bfu_\delta]$ to \eqref{L1del}--\eqref{L4del}. Moreover, we have the estimates
\begin{align} \label{est:delta1}
 \expe{ \| \vt \|_{L^{30/7}(Q)}^{30/7} }
 &+\expe{ \int_Q  E_H^{\delta,\overline{\vt}}(\vr,\vt,\bfu) \ \dx \int_Q\sigma_{\delta}\dx }\\
 &\aleq  \E\Big[  \intQ{ E_H^{\delta,\overline{\vt}}(\varrho,\vartheta) }\Big]+1,\nonumber\\
 \label{est:delta2}\E\int_Q\sigma_{\delta}\dx &\lesssim\,1+ \E\intO{  E_H^{\delta,\overline{\vt}}(\vr,\vt,\bfu) }
,\end{align}
uniformly in $\delta$, where 
\begin{align*}
\sigma_{\delta}&=\frac{1}{\vartheta}\Big[\mathbb S(\vartheta,\nabla\bfu):\nabla\bfu+\frac{\kappa(\vartheta)}{\vartheta}|\nabla\vartheta|^2+\frac{\delta}{2}\Big(\vt^{\Gamma-1}+\frac{1}{\vartheta^2}\Big)|\nabla\vartheta|^2+\delta\frac{1}{\vartheta^2}\Big].
\end{align*}
\end{Proposition}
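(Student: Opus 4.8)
The plan is to obtain $[\vr_\delta,\vt_\delta,\bfu_\delta]$ as a limit, along a sequence $\ep\to 0$, of the entire stationary solutions $[\vr_\ep,\vt_\ep,\bfu_\ep]$ to \eqref{L1}--\eqref{L4} provided by Proposition~\ref{prop:main}. On these solutions the bounds \eqref{stat:entropy}, \eqref{eq:2811}, \eqref{eq:key}, \eqref{d5} and \eqref{d5'} derived above are at our disposal, uniformly in $\ep\le\ep_0$ (and, once $\ep$ is taken small compared with $\delta$, uniformly in $\delta$ as well). Together with stationarity they yield uniform estimates for $\vr_\ep$ in $L^\infty_tL^{\Gamma+1}_x$, for $\vt_\ep$ in $L^\infty_tL^{30/7}_x\cap L^2_tW^{1,2}_x$, for $\sqrt{\vr_\ep}\,\bfu_\ep$ in $L^\infty_tL^2_x$ and for $\bfu_\ep$ in $L^2_tW^{1,2}_x$, the gradients of $\vt_\ep$ and $\bfu_\ep$ being controlled through the entropy production $\sigma_{\ep,\delta}$. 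Combining these with the equations \eqref{L1}--\eqref{L4} to produce the usual fractional-in-time estimates in negative Sobolev norms, one checks that the laws of $[\vr_\ep,\vt_\ep,\bfu_\ep,W]$ are tight on the trajectory space $\mathcal{T}$.

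I would then apply the Jakubowski--Skorokhod theorem \cite{jakubow} to pass, along a subsequence, to a new probability space carrying random variables with the same laws which converge almost surely in $\mathcal{T}$ to a limit $[\vr_\delta,\vt_\delta,\bfu_\delta,W_\delta]$. Since the uniform bounds no longer control $\nabla\vr_\ep$ once $\ep\to 0$, the passage to the limit in the pressure $p_\delta(\vr_\ep,\vt_\ep)$ and in the convective term requires the standard Navier--Stokes--Fourier compactness machinery as used in \cite[Section~5]{BF} (combining the stochastic compactness of \cite{BFHbook} with the deterministic arguments of \cite{F}): the effective viscous flux identity, the renormalised continuity equation, and the boundedness of the oscillation defect measure --- the latter being where $\Gamma$ has to be taken sufficiently large --- which upgrade the weak convergence of $\vr_\ep$ to strong convergence in $L^1$. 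Strong convergence of $\vt_\ep$ follows as in the deterministic theory, see \cite[Section~3.5]{F}, using the $L^2_tW^{1,2}_x$-bound and the monotone structure of the Fourier flux. The stochastic integral is handled by \cite[Thm.~2.9.1]{BFHbook}, and the $\ep$-regularising terms vanish by the uniform estimates. The quantities collected in $\sigma_{\ep,\delta}$ are convex in the gradients, hence only weakly lower semicontinuous, which is precisely what the inequality sign in \eqref{L3del} accommodates. The outcome is a weak martingale solution of \eqref{L1del}--\eqref{L4del}.

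Stationarity is preserved in the limit: the approximations on the new probability space are equal in law to the entire stationary solutions of the $\ep$-level, hence stationary, and Lemma~\ref{lem:stac}, applied to $\vr_\ep$, $\vt_\ep$, $\bfu_\ep$ and, together with the stability of shift-invariance of laws under narrow convergence, to $W_\ep$, shows that $[\vr_\delta,\vt_\delta,\bfu_\delta,W_\delta]$ is again an entire stationary solution. Finally, the estimates \eqref{est:delta1} and \eqref{est:delta2} are obtained by taking expectations in \eqref{eq:key} and in \eqref{stat:entropy} respectively, and letting $\ep\to 0$: the left-hand sides are, after expectation, built from convex weakly lower semicontinuous functionals of the solution --- with the total energy $\intQ{E_H^{\delta,\overline{\vt}}(\vr,\vt)}$ in fact converging along the scheme thanks to the compactness of the kinetic energy --- so Fatou's lemma delivers the inequalities with constants independent of $\delta$. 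I expect the main obstacle to be organising the whole procedure purely at the level of trajectory laws: the deterministic density- and temperature-compactness arguments and the lower-semicontinuous treatment of $\sigma_{\ep,\delta}$ must be carried out within the stochastic, infinite-time, stationary framework, using no information on the initial data, so that Lemma~\ref{lem:stac} can be invoked; fortunately the corresponding limit passage has essentially been done in \cite{BF}.
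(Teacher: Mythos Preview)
Your proposal is correct and follows essentially the same route as the paper: the uniform-in-$\ep$ estimates \eqref{stat:entropy}, \eqref{eq:key}, \eqref{d5}, \eqref{d5'} derived in Section~\ref{sec:lim} are used to invoke the vanishing viscosity limit of \cite[Section~5]{BF} (stochastic Skorokhod-type compactness combined with the deterministic effective viscous flux machinery of \cite{F}), and \eqref{est:delta1}--\eqref{est:delta2} are then read off from the $\delta$-uniform bounds \eqref{eq:key} and \eqref{stat:entropy} by lower semicontinuity. You spell out in somewhat more detail the preservation of stationarity via Lemma~\ref{lem:stac} and the role of convexity/Fatou for the entropy production, but this is exactly what the paper subsumes in its reference to \cite{BF}.
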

\begin{corollary}
The solution from Proposition \ref{prop:1608bdel} satisfies the equation of continuity in the renormalised sense.
\end{corollary}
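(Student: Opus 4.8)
The plan is to run the by-now-classical DiPerna--Lions renormalisation argument; the essential observation is that the equation of continuity \eqref{L1del} carries no stochastic forcing, so that for $\prst$-a.e.\ $\omega$ it is a linear transport equation to which the deterministic theory applies verbatim. First I would record the integrability that makes this possible: the $\delta$-analogue of the pressure estimate \eqref{d5} combined with stationarity gives $\E\int_0^T\intO{\vr_\delta^{\Gamma+1}}\dt\leq cT$ for every $T>0$, hence $\prst$-a.s.\ one has $\vr_\delta\in L^{\Gamma+1}((0,T)\times Q)$ — in particular $\vr_\delta\in L^2_{\rm loc}([0,\infty)\times Q)$ since $\Gamma$ is large — while $\bfu_\delta\in L^2_{\rm loc}([0,\infty);W^{1,2}_0(Q;\R^3))$.

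Fixing such an $\omega$, the first step is to mollify \eqref{L1del} in the spatial variable: denoting by $\vr^\eta$ the mollification of $\vr_\delta$ at scale $\eta>0$, one gets $\partial_t\vr^\eta+\Div(\vr^\eta\bfu_\delta)=r_\eta$, where the Friedrichs commutator $r_\eta$ tends to $0$ in $L^1_{\rm loc}([0,\infty)\times Q)$ as $\eta\to0$ (this uses exactly $\vr_\delta\in L^2_{\rm loc}$ and $\bfu_\delta\in L^2_{\rm loc}(W^{1,2})$). Multiplying by $b'(\vr^\eta)$, applying the chain rule and letting $\eta\to0$ then yields
\[
\partial_t b(\vr_\delta)+\Div\big(b(\vr_\delta)\bfu_\delta\big)+\big(b'(\vr_\delta)\vr_\delta-b(\vr_\delta)\big)\Div\bfu_\delta=0
\]
in the sense of distributions on $(0,\infty)\times Q$, $\prst$-a.s., first for $b\in C^1[0,\infty)$ with $b'$ of compact support. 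The second step is to extend this identity, by truncation, to the class of $b$ needed later for the limit $\delta\to0$ — in particular $b(z)=z\log z$ and, more generally, $b\in C[0,\infty)\cap C^1(0,\infty)$ with $|b'(z)z|\lesssim 1+z^{\Gamma}$; here the additional terms converge precisely because of the $L^{\Gamma+1}$-bound on $\vr_\delta$ and $\bfu_\delta\in L^2_{\rm loc}(W^{1,2}_0)$. Progressive measurability of $b(\vr_\delta)$ is automatic, every operation being performed $\omega$-wise.

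I do not expect a genuine obstacle here: the whole argument is standard, cf.\ \cite{F} (see also \cite{BFHbook}). The only point worth stressing is that it is the global-in-space bound $\vr_\delta\in L^{\Gamma+1}$ — rather than merely $\vr_\delta\in L^\infty_tL^\Gamma_x$ — that is required to make the renormalisation available for the unbounded nonlinearities appearing in the subsequent limit passage, and this is exactly what the pressure estimate carried over from the $\varepsilon$-level provides.
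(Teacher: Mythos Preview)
Your proposal is correct and follows precisely the standard route the paper tacitly invokes: in the paper the corollary is stated without proof, the authors regarding the renormalisation as well--known once $\vr_\delta\in L^2_{\rm loc}$ and $\vu_\delta\in L^2_{\rm loc}(W^{1,2}_0)$ are available (cf.\ \cite{F}, \cite{BFHbook}). Your sketch --- pathwise DiPerna--Lions, Friedrichs commutator, then truncation --- is exactly that argument.
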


\subsection{The vanishing artificial pressure limit}
Though \eqref{est:delta1} and \eqref{est:delta2} are uniform in $\delta$, the final estimates \eqref{d5} and \eqref{d5'} are not. Again we have to close the estimate by some pressure bounds.
Let $(\vr,\vt,\bfu)$ be a stationary solution to \eqref{L1del}--\eqref{L4del} as obtained in
Proposition \ref{prop:1608bdel}.
Arguing as in \cite[Section 6]{BFHM} (replacing again $\nabla\Delta^{-1}$ by the Bogovskii operator $\mathcal B$)
we have
\begin{equation} \label{Dd5}
\begin{split}
&\expe{  \intO{ \bigg[  p_\delta(\vr,\vt) \vr^{\alpha}   +
\vr_{\delta}^{1 + \alpha} |\vu|^2 \bigg]} }  \\
& \leq
c(M_0) \left( \expe{ \intO{ \left[ \frac{1}{2} \vr |\vu|^2 + p_\delta(\vr,\vt) \right] } } + 1 \right) \\
 &\qquad+
\expe{ \intO{ \left( \frac{4}{3}\mu(\vt)+\eta(\vt) \right) \Div \vu \ \vr^\alpha  } } \\
& \qquad+ \expe{ \intO{ \left( \vr \vu \otimes \vu - \frac{1}{3} \vr |\vu |^2 \mathbb{I} \right) : \Grad \mathcal B \left[ \vr^\alpha \right] } } \\
&\qquad + \expe{ \intO{ \vr \vu \cdot \mathcal B [ \Div (\vr^\alpha \vu ) + (\alpha - 1) \vr^\alpha \Div \vu ] }}\\
&=:(I)+(II)+(III)+(IV),
\end{split}
\end{equation}
where $\alpha>0$ will be chosen sufficiently small. As in the proof of \cite[Prop. 6.1]{BFHM}
we obtain
\begin{align*}
(I)+(III)+(IV)\lesssim \expe{ \int_Q  E_H^{\delta,\overline{\vt}}(\vr,\vt,\bfu) \ \dx \int_Q|\nabla\bfu|^2\dx }+\expe{ \int_Q  E_H^{\delta,\overline{\vt}}(\vr,\vt,\bfu) \ \dx }+1.
\end{align*}
Also we see that
\begin{align*}
(III)&\lesssim \expe{ \int_Q|\nabla\bfu|^2\dx }+\expe{ \int_Q\big(\varrho^\gamma+\vt^4\big)\dx }+1\\&\lesssim \expe{ \int_Q|\nabla\bfu|^2\dx }+\expe{ \int_Q  E_H^{\delta,\overline{\vt}}(\vr,\vt,\bfu) \ \dx }+1
\end{align*}
choosing $\alpha$ small enough and using \eqref{m105} and \eqref{m106}.
Combining these estimate with \eqref{est:delta1} and \eqref{est:delta2} we conclude
\begin{align}\label{pressure:delta}
\begin{aligned}
\E\bigg[  \int_Q  \big(\delta\varrho^{\Gamma+\alpha}+&\vr^{\gamma+\alpha}+\vt^4\vr^\alpha+\vr^{1+\alpha}|\bfu|^2 \big)\dx\bigg]\\&\lesssim \expe{ 1+ \intO{  \big(\delta\vr^\Gamma+\varrho^\gamma+\vt^4+\vr|\bfu|^2\big) } },
\end{aligned}
\end{align}
recalling also \eqref{eq:2811}.
As in the proof of \eqref{d5} and \eqref{d5'} we deduce
\begin{align}\label{d5delta}
&\expe{  \intO{ \left[\vr^{\gamma+1}+\vt^4\vr+\vt^{30/7}   +
\vr^{2} |\vu|^2 \right]} }\leq \,c\\
\label{d5'delta}
& \expe{ \left( \intO{  E_H^{\delta,\overline{\vt}}(\vr,\vt,\bfu) } \right)
\int_{\mt}
|\nabla\bfu|^2 \dx}  \leq\,c,\\
\label{d5''delta}
& \expe{ \left( \intO{  E_H^{\delta,\overline{\vt}}(\vr,\vt,\bfu) } \right)
\int_{\mt}
\sigma_\delta \dx}  \leq\,c,
\end{align}
using \eqref{est:delta2}.
With estimates \eqref{d5delta} and \eqref{d5'delta} at hand we can follow the lines of \cite[Section 6]{BF} to pass to the limit
$\delta\rightarrow0$ in \eqref{L1del} and \eqref{L2del}. The limit
in \eqref{L1del} and \eqref{L2del} can be performed as in \cite[Section 7]{BF} due to \eqref{d5delta} and \eqref{d5''delta}. This finishes the proof of Theorem \ref{Tm1}.\\\ 

\section*{Compliance with Ethical Standards}\label{conflicts}

\smallskip
\par\noindent 
{\bf Funding}. 
The research of E.F.~leading to these results 
has received funding from
the Czech Sciences Foundation (GA\v CR), Grant Agreement
21--02411S.
The Institute of Mathematics of the Academy of Sciences of
the Czech Republic is supported by RVO:67985840.

\smallskip
\par\noindent
{\bf Conflict of Interest}. The authors declare that they have no conflict of interest.

\smallskip
\par\noindent
{\bf Data Availability}. Data sharing is not applicable to this article as no datasets were generated or analysed during the current study.

\end{document}